\numberwithin{equation}{section}
\newtheorem{proposition}{Proposition}[section]
\newtheorem{lemma}[proposition]{Lemma}
\newtheorem{corollary}[proposition]{Corollary}
\newtheorem{theorem}[proposition]{Theorem}
\newtheorem{definition}[proposition]{Definition}
\theoremstyle{remark}
\newtheorem{remark}[proposition]{Remark}
\newcommand{\U}{\mathbf U}
\newcommand{\Ui}{\U^\imath}
\newcommand{\tU}{\widetilde{\U}}
\newcommand{\tUi}{\widetilde{{\mathbf U}}^\imath}
\newcommand{\Sym}{\operatorname{Sym}\nolimits}
\newcommand{\qbinom}[2]{\begin{bmatrix} #1\\#2 \end{bmatrix} }
\newcommand{\TT}{\texttt{\rm T}}
\newcommand{\tUiD}{{}^{\mathrm{Dr}}\tUi}
\newcommand{\arxiv}[1]{\href{http://arxiv.org/abs/#1}{\tt arXiv:\nolinkurl{#1}}}
\newcommand{\Rmnum}[1]{\expandafter\@slowromancap\romannumeral #1@}
\def \TH{\Theta}
\def \g{\mathfrak{g}}
\def \hg{\widehat{\mathfrak{g}}}
\def \bH{{\mathbf H}}
\def \B{{\mathbf B }}
\def \K{\mathbb{K}}
\def \bC{\mathbb{C}}
\def \bQ{\mathbb{Q}}
\def \Z{\mathbb{Z}}
\def \I{\mathbb{I}}
\def \R{\mathbb{R}}
\def \bS{\mathbb{S}}
\def \cR{\mathcal {R}}
\def \bZ{\mathbb{Z}}
\def \bTH { \boldsymbol{\Theta}}
\def \bDel{ \boldsymbol{\Delta}}
\def \vs{\varsigma}
\def \bvs{\boldsymbol{\varsigma}}
\def \he{\mathrm{ht}}
\def \bvs{{\boldsymbol{\varsigma}}}
\begin{document}

\title[ A Drinfeld type presentation of affine $\imath$quantum groups, II]
{A Drinfeld type presentation of affine $\imath$quantum groups II: split BCFG type}

\author[Weinan Zhang]{Weinan Zhang}
\address{Department of Mathematics, University of Virginia, Charlottesville, VA 22904, USA}
\email{wz3nz@virginia.edu}


\subjclass[2010]{Primary 17B37, 17B67.}

\keywords{Affine quantum groups, Drinfeld presentation, $\imath$Quantum groups, Quantum symmetric pairs }

\begin{abstract}
Recently, Lu and Wang formulated a Drinfeld type presentation for $\imath$quantum group $\tUi$ arising from quantum symmetric pairs of split affine ADE type. In this paper, we generalize their results by establishing a current presentation for $\tUi$ of arbitrary split affine type. 
\end{abstract}

\maketitle
\setcounter{tocdepth}{1}
 \tableofcontents
 \section{Introduction}
\subsection{Background}
The affine quantum group, denoted by $\U$, admits two presentations: the Serre presentation introduced by Drinfeld-Jimbo, and the current presentation, also known as the Drinfeld presentation \cite{Dr88}. The isomorphism between these two presentations was stated by Drinfeld, and a detailed proof was supplied by Beck \cite{Be94} and Damiani \cite{Da12} \cite{Da15}. This current presentation has been shown to be crucial in the representation theory of affine quantum groups; see the survey paper \cite{CH10} for partial references. \par

Quantum symmetric pairs $(\U,\Ui_\bvs)$ were introduced by Letzter in \cite{Let99} for finite type and generalized to Kac-Moody type by Kolb \cite{Ko14}. The \emph{$\imath$quantum group} $\Ui_\bvs$ arising from quantum symmetric pairs is a coideal subalgebra of $\U$ associated with an involution on the underlying root system. 
The \textit{universal $\imath$quantum group} $\tUi$  \cite{LW19a} is a coideal subalgebra of Drinfeld double quantum group $\tU$, and $\Ui_{\bvs}$ can be recovered from $\tUi$ by a central reduction. The version $\tUi$ naturally arises in an $\imath$Hall algebra realization of $\imath$quantum groups \cite{LW19a}, and a braid group action on $\tUi$ is realized via the reflection functors in this approach \cite{LW21a}. \par

Both affine quantum groups and affine $\imath$quantum groups are closely related to the quantum integrable systems. The affine quantum groups, which are trigonometric solutions of the quantum Yang-Baxter equation (QYBE), arise in the quantum inverse scattering method \cite[\S 11,13]{Dr87}. The affine $\imath$quantum groups, as solutions for the reflection equation $(=$boundary QYBE), appear in the framework of quantum integrable systems with certain boundary conditions. The $\imath$quantum group of split affine rank one is known as the $q$-Onsager algebra, which plays a crucial role in the study of the XXZ open spin chain \cite{BB13}. The $\imath$quantum groups of higher ranks are known as the ``generalized $q$-Onsager algebras", whose applications in affine Toda field theory are developed \cite{BB10}. \par

According to \cite{BW18}, various algebraic, geometric, categorical results for quantum groups are expected to have corresponding analogues for $\imath$quantum groups. In particular, it is natural to ask whether there is a current presentation for $\Ui_\bvs$, analogous to the current presentation for $\U$. There are several early attempts to construct a current presentation for the $q$-Onsager algebra in \cite{BS10,BK20}.

A major progress toward this question is the recent work by Lu and Wang \cite{LW20b}, who formulated a Drinfeld type presentation for $\tUi$ of split affine ADE type. In the rank one case, Lu-Wang's Drinfeld type presentation was built on the construction of root vectors and relations by Baseilhac and Kolb \cite{BK20}. The braid group action on $\tUi$ plays an essential role in Lu and Wang's construction, just as for affine quantum groups \cite{Da93, Be94}.\par

\subsection{ Main results }

The goal of this paper is to generalize results in \cite{LW20b} to arbitrary split (untwisted) affine types, namely, to provide a Drinfeld type presentation for the split $\imath$quantum group $\tUi=\tUi(\hg)$ where $\g$ is any simple Lie algebra and $\hg$ is the corresponding untwisted affine Lie algebra. Generalizing from split ADE types to split BCFG types requires new ideas, which will be explained below; in the meantime, our proofs for several relations in the general setting also simplify Lu-Wang's original proofs for split ADE types.
\par
 Let us explain our approach in details. Let $(c_{ij})_{i,j\in\I_0}$ denote the Cartan matrix for $\g$. We start by recalling the Serre presentation of $\tUi$ in Definition~ \ref{def:tUi}, following \cite{LW19a} and \cite{LW20a}. Such a presentation is obtained by modifying the Serre presentation for $\Ui_{\bvs}$ given in \cite[Theorem 3.1]{CLW18} (built on earlier work by Kolb and Letzter for $\Ui_{\bvs}$). The braid group action on $\tUi$ (see Lemma \ref{lem:Ti}) arises from $\imath$Hall algebras \cite{CLW21, LW21a, LW21b}, and it recovers the braid group action established in \cite{KP11, BK20} for $\Ui_{\bvs}$ (who worked with specific parameters).
 \par
 We shall use similar definitions and notations \eqref{Bik}-\eqref{Thim} for root vectors in $\tUi$ (i.e. generators in the Drinfeld type presentation) as in \cite[(3.28)-(3.30)]{LW20b}; that is, we shall use $B_{i,k}$ for the real root vectors, $\acute{\TH}_{i,m}$ for the imaginary root vectors constructed in \cite{BK20} in the rank one case and adapted to our general case, and $\TH_{i,m}$ for the alternative imaginary root vectors originated from $\imath$Hall algebra approach \cite{LRW20}. We shall focus on $\TH_{i,m}$ instead of $\acute{\TH}_{i,m}$ throughout this paper.
\par
Let us explain how we formulate the relations \eqref{iDRG0}-\eqref{iDRG6'} for $B_{i,k},\TH_{i,m}$ which are used in a current presentation for $\tUi$. Relations \eqref{iDRG0}-\eqref{iDRG34} are natural generalizations of \cite[(3.33)-(3.37)]{LW20b} to the case when $c_{ij}$ is arbitrary.
\par
The remaining task is to obtain a suitable formulation of the Serre relations (in the current presentation). For $c_{ij}=-1$, there are two (equivalent) formulations of the Serre relations available: one is the general version formulated in \cite[(3.32),(3.38)]{LW20b}, and the other is the equal-index version \eqref{iDRG4'} (which is the special cases of the general version); the latter is much simpler than the general version and is obtained by applying degree shift automorphisms to the corresponding finite type Serre relation \eqref{eq:S2}. As for their generalizations to $c_{ij}<-1$, the general versions of Serre relations are going to be extremely complicated as Lu and Wang's formulation suggests. However, the equal-index versions \eqref{iDRG5'}-\eqref{iDRG6'} can be obtained relatively easily. Hence, we choose to use relations \eqref{iDRG4'}-\eqref{iDRG6'} in our current presentation. 
\par
 There are two supporting examples for the use of this equal-index version of Serre relations. For affine quantum groups, Damiani in \cite[Theorem 11.18]{Da12} showed that the Drinfeld presentation $ ^{\mathrm{Dr}}\U$ is equivalent to a reduced current presentation $ ^{\mathrm{Dr}}\U_{red}$ where Serre relations are replaced by the corresponding equal-index version. (See Proposition~ \ref{prop:Da}.) Moreover, Lu and Wang showed in \cite[\S 4.7-4.8]{LW20b} that their general Serre relations can be derived from other defining relations and the equal-index Serre relations \eqref{iDRG4'}. In other words, for $\tUi$ of split ADE type, if we replace the Serre relation in the current presentation formulated in \cite[Definition 3.10]{LW20b} by the equal-index version \eqref{iDRG4'} of itself, we obtain an equivalent presentation.
 \par
 Generalizing these phenomenons, we define an algebra $\tUiD_{red}$ in Definition~ \ref{def:Dpr} with defining relations \eqref{iDRG0}-\eqref{iDRG6'}. We shall show that $\tUiD_{red}$ is isomorphic to $\tUi$ in Theorem~ \ref{DprADE}. We call $\tUiD_{red}$ a reduced Drinfeld type presentation.
\par
In the proof of this isomorphism, most defining relations of $\tUiD_{red}$ are verified in $\tUi$ in a similar way as \cite[\S 4]{LW20b}. 
A major exception is the relation \eqref{iDRG1'} for $i\neq j$, whose original proof for $c_{ij}=-1$ in \cite[\S 4.7-4.8]{LW20b} is no longer effective when $c_{ij}<-1$, since it requires the help of the general Serre relation. In this paper, we provide a new inductive proof of the relation \eqref{iDRG1'} for $i\neq j$, which is based on a recursive formula \eqref{recur} uniformly established for arbitrary $c_{ij}$ in \S \ref{Verf1}.
\par
Surprisingly, the base case for the induction is much more challenging, and we establish it case by case depending on $c_{ij}$ in \S \ref{Verf2}. For $c_{ij}=-1$, the base case is verified using the finite type Serre relation. For $c_{ij}=-2$, the base case is derived from formulas for the braid group action. For $c_{ij}=-3$, it turns out we need both of the finite type Serre relation and formulas of the braid group action to prove the base case.
\par
Two techniques are widely used in our proof of \eqref{iDRG1'} and later in the proof of Serre relations, analogous to \cite{Da12}. One is the $q$-brackets, which allow us to write Serre relations and formulas of the braid group action in compact forms (e.g. \eqref{Se:brkt}, \eqref{Br:brkt1} etc.) and then deal with them efficiently. The other one is the degree shift automorphism $\TT_{\omega_i}$, coming from the braid group action, which sends $B_{i,k}$ to $B_{i,k-1}$ and fixes $B_{j,l},$ for all $j\neq i$. These degree shift automorphisms allow us to recover a general relation from a more basic version and thus minimize the required amount of work.
\par
It is still desirable to have general Serre relations, which we shall provide when $c_{ij}=-2$. We next explain our approach toward it.
\par
For $c_{ij}=-1$, the general Serre relation \eqref{iDRG5} is first formulated in \cite[(3.38), (5.6)]{LW20b}. We offer a more direct proof in terms of generating functions in \S \ref{SeADE}, compared with \cite[\S 4.7,4.8]{LW20b}. This new proof also offers a method to formulate general Serre relations which admits a natural generalization to cases $c_{ij}<-1$.
\par

For $c_{ij}=-2$, we generalize this method and formulate a general Serre relation \eqref{iDRG6} in terms of generating functions. Details for the proof of \eqref{iDRG6} is included in \S \ref{symfun}-\ref{genfor}. Such a formulation has several remarkable features. (Similar features also hold for $c_{ij}=-1$.)
    \begin{enumerate}
    \item Each component of its RHS is a finite sum, and the constant term is the same as the corresponding finite type Serre relation \eqref{eq:S3}. (see Remark \ref{rmk:com})
    \item It can be viewed as a deformation of the Serre relation in the original Drinfeld presentation. (see Remark \ref{rmk:spe})
    \end{enumerate}
Adding general Serre relations \eqref{iDRG5}-\eqref{iDRG6} to the reduced presentation, a (complete) Drinfeld type presentation for $\tUi$ of split affine type BCF is formulated and proved in Theorem \ref{DprBCF}.
 \par
For $c_{ij}=-3$, while it is still possible to formulate a version of the general Serre relation, computation becomes much more involved and we will skip it. For practical purposes such as developing the representation theory of $\tUi$, we do not need this.

\subsection{Organization}
This paper is organized as follows. In Section \ref{Prel}, we set up notations and review the basic theory of affine quantum groups and affine $\imath$quantum groups.
In Section \ref{ADE}, we formulate a current presentation for $\tUi$ of arbitrary split affine type in Definition \ref{def:Dpr} and Theorem \ref{DprADE}.
In Section \ref{BCF}, we establish a Drinfeld type presentation in terms of generating functions for $\tUi$ in Theorem \ref{DprBCF}.\par
In Section \ref{verf}, we verify the relation \eqref{iDRG1'} in the current presentation using an induction. We establish a recursive formula for the induction in Section \ref{Verf1} and check base cases in Section \ref{Verf2}.
In Section \ref{Serre}, we verify general Serre relations: the one for the $c_{ij}=-1$ case is verified in Section \ref{SeADE}, and the one for the $c_{ij}=-2$ case is verified in Section \ref{symfun}-\ref{genfor}.

\vspace{2mm}
\noindent {\bf Acknowledgement.}
The author would like to thank Ming Lu and his advisor Weiqiang Wang for sharing their work earlier and for many helpful discussions and advices. This work is partially supported by the GRA fellowship of Wang's NSF grant DMS-2001351.

\section{Preliminaries}\label{Prel}
\subsection{Affine Weyl groups}
Set $\I_0=\{1,\ldots,n\}$. Let $\g$ be a simple Lie algebra with Cartan matrix $ (c_{ij})_{i,j\in\I_0}$. Let $ d_i $ be relatively prime positive integers such that $(d_i c_{ij})_{i,j\in \I_0}$ is a symmetric matrix. 

Let $\cR_0$ denote the root system of $\g$. Fix a set of simple roots $\{\alpha_i|i\in \I_0\}$ for $\cR_0$ and denote the corresponding positive system by $\cR_0^+$. Let $ Q=\bigoplus_{i\in \I_0}\bZ \alpha_i$ be the root lattice of $\g$ and $P$ be the dual lattice of $Q$. The bilinear pairing between $P,Q$ is denoted by $\langle \cdot, \cdot \rangle: P\times Q \rightarrow \Z$. The lattice $P$ is known as the weight lattice of $\g$ and $P=\bigoplus_{i\in \I_0}\bZ \omega_i$, where $\omega_i$ are fundamental weights of $\g$ given by $ \langle \omega_i, \alpha_j \rangle =\delta_{i,j}$. We identify $Q$ as a sublattice of $P$ via $\langle \alpha_i,\alpha_j\rangle=d_i c_{ij}$. Let $\theta$ be the highest root for $\g$.
\par
Set $\I=\I_0\cup\{0\}$. Let $\widehat{\mathfrak{g}}$ be the untwisted affine Lie algebra associated to $\g$ with the affine Cartan matrix $ (c_{ij})_{i,j\in\I}$. Extend $Q$ to the affine root lattice $\widetilde{Q}:=Q\oplus \Z\alpha_0$. It is known that the element $\delta=\alpha_0+\theta\in \widetilde{Q}$ satisfies $\langle \alpha_i, \delta\rangle =0, \forall i\in \I$. The root system $\cR$ and the set of positive roots $\cR^+$ for $\widehat{\g}$ are defined to be
\begin{align}
\cR &=\{\pm (\beta + k \delta) \mid \beta \in \cR_0^+, k  \in \Z\}  \cup \{m \delta \mid m \in \Z\backslash \{0\} \},
     \label{eq:roots}  \\
\cR^+ &= \{k \delta +\beta \mid \beta \in \cR_0^+, k  \ge 0\}
\cup  \{k \delta -\beta \mid \beta \in \cR_0^+, k > 0\}
\cup \{m \delta \mid m \ge 1\}.
  \label{eq:roots+}
 \end{align}
Let $s_i$ be the reflection acting on $\widetilde{Q}$ by
$s_i(x)=x-\langle x, \alpha_i \rangle\alpha_i$ for $i \in \I$. The Weyl group $W_0$ of $\g$ and the affine Weyl group $W$ of $\hg$ are subgroups of $\mathrm{Aut}(\widetilde{Q})$ generated by $s_i,i\in \I_0$ and by $s_i,i\in \I$, respectively.
\par
The extended affine Weyl group $\widetilde{W}$ is the semi-direct product $W_0 \ltimes P$. It is known that $W \cong W_0 \ltimes Q$ and thus $W$ is identified with a subgroup of $\widetilde{W}$. For $\omega\in P$, write $\omega $ for the element $(1,\omega)\in \widetilde{W}.$ For $s\in W_0$, write $s$ for the element $(s,0)\in \widetilde{W}$.
\par
 There is a $\widetilde{W}$-action on $\widetilde{Q}$ extending the $W_0$-action on $\widetilde{Q}$ such that $\omega(\alpha_i)=\alpha_i-\langle \omega, \alpha_i \rangle \delta$ for $\omega\in P,i\in \I$.
We identify $P/Q$ with a finite group $\Omega$ of Dynkin diagram automorphism, and thus $\widetilde{W} \cong\Omega \ltimes W$. The length function $l$ on $W$ extends to $ \widetilde{W}$ by setting $l(\tau w)=l(w)$ for $\tau \in \Omega, w\in W$.

\subsection{Drinfeld presentation for affine quantum groups}

Let $v$ be the quantum parameter and $v_i=v^{d_i}$. Define, for $n\in \Z, a\in \bQ(v)$
\[
[n]_{a}=\frac{a^n-a^{-n}}{a-a^{-1}}, \qquad [n]_a!=[n]_a[n-1]_a\cdots [1]_a,\qquad \qbinom{n}{s}_a=\frac{[n]_a!}{[s]_a![n-s]_a!}.
\]
Write $[A,B]=AB-BA$ and $[A,B]_a=AB-aBA$. \par
Let $\U$ be the Drinfled-Jimbo quantum group associated to $\hg$ with Chevalley generators $\{E_i,F_i,K_i^{\pm1}| i\in \I\}$. Let $\U^-$ be the subalgebra of $\U$ generated by $F_i,i\in \I$.\par
It was formulated in \cite{Dr88} that $\U$ is isomorphic to $^{\mathrm{Dr}}\U$, where $^{\mathrm{Dr}}\U$ is the $\bQ(v)$-algebra generated by $x_{i k}^{\pm}$, $h_{i l}$, $K_i^{\pm 1}$, $C^{\pm \frac12}$, for $i\in\I_0$, $k\in\Z$, and $l\in\Z\backslash\{0\}$, subject to the following relations:
\begin{align}
&C^{\frac{1}{2}}, C^{- \frac12} \text{ are central,}\label{Dr1}
\\
[K_i,K_j] & =  [K_i,h_{j l}] =0, \quad K_i K_i^{-1} =C^{\frac12} C^{- \frac12} =1,
\\
[h_{ik},h_{jl}] &= \delta_{k, -l} \frac{[k c_{ij}]_{v_i}}{k} \frac{C^k -C^{-k}}{v_j -v_j^{-1}},\label{Dr3}
\\
K_ix_{jk}^{\pm} K_i^{-1} &=v_i^{\pm c_{ij}} x_{jk}^{\pm},
 \\
[h_{i k},x_{j l}^{\pm}] &=\pm\frac{[kc_{ij}]_{v_i}}{k} C^{\mp \frac{|k|}2} x_{j,k+l}^{\pm},\label{Dr5}
\\
[x_{i k}^+,x_{j l}^-] &=\delta_{ij} {(C^{\frac{k-l}2} K_i\psi_{i,k+l} - C^{\frac{l-k}2} K_i^{-1} \varphi_{i,k+l})}, 
 \\
x_{i,k+1}^{\pm} x_{j,l}^{\pm}-v_i^{\pm c_{ij}} x_{j,l}^{\pm} x_{i,k+1}^{\pm} &=v_i^{\pm c_{ij}} x_{i,k}^{\pm} x_{j,l+1}^{\pm}- x_{j,l+1}^{\pm} x_{i,k}^{\pm},\label{Dr7}
 \\
\Sym_{k_1,\dots,k_r}\sum_{t=0}^{r} (-1)^t \qbinom{r}{t}_{v_i}
& x_{i,k_1}^{\pm}\cdots
 x_{i,k_t}^{\pm} x_{j,l}^{\pm}  x_{i,k_t+1}^{\pm} \cdots x_{i,k_n}^{\pm} =0, \text{ for } r= 1-c_{ij}\; (i\neq j),\label{Dr8}
\end{align}
where
$\Sym_{k_1,\dots,k_r}$ denotes the symmetrization with respect to the indices $k_1,\dots,k_r$, $\psi_{i,k}$ and $\varphi_{i,k}$ are defined by the following functional equations:
\begin{align*}
1+ \sum_{m\geq 1} (v_i-v_i^{-1})\psi_{i,m}u^m &=  \exp\Big((v_i -v_i^{-1}) \sum_{m\ge 1}  h_{i,m}u^m\Big),
\\
1+ \sum_{m\geq1 } (v_i-v_i^{-1}) \varphi_{i, -m}u^{-m} & = \exp \Big((v_i -v_i^{-1}) \sum_{m\ge 1} h_{i,-m}u^{-m}\Big).
\end{align*}
We refer to \cite{Be94} for a proof of the isomorphism $\phi$: $^{\mathrm{Dr}}\U \to\U$.\par
In \cite[\S 11]{Da12}, the general Serre relation \eqref{Dr8} is proved to be redundant except the case $k_1=k_2=\cdots=k_{1-c_{ij}}$. Let $^{\mathrm{Dr}}\U_{red}$($red$ stands for the reduced presentation) denote the $\bQ(v)$-algebra generated by $x_{i k}^{\pm}$, $h_{i l}$, $K_i^{\pm 1}$, $C^{\pm \frac12}$ subject to relations \eqref{Dr1}-\eqref{Dr7} and
\begin{equation}
\sum_{t=0}^{r} (-1)^t \qbinom{r}{t}_{v_i}
 \big(x_{i,k }^{\pm}\big)^{r-t}  x_{j,l}^{\pm} \big( x_{i,k }^{\pm}\big)^t  =0, \text{ for } r= 1-c_{ij},k,l\in \Z\; (i\neq j).
 \end{equation}
\begin{proposition}[\text{\cite[Theorem 11.18]{Da12}}]\label{prop:Da}
$^{\mathrm{Dr}}\U$ is isomorphic to $^{\mathrm{Dr}}\U_{red} $ by sending generators $x_{i k}^{\pm}$, $h_{i l}$, $K_i^{\pm 1}$, $C^{\pm \frac12}$ to those with the same names.
\end{proposition}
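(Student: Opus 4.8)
The plan is to produce mutually inverse homomorphisms that are the identity on the common generators $x_{ik}^{\pm},h_{il},K_i^{\pm1},C^{\pm\frac12}$. Since the relations \eqref{Dr1}--\eqref{Dr7} are imposed verbatim in both presentations, everything reduces to comparing the two Serre relations. The equal-index relation defining ${}^{\mathrm{Dr}}\U_{red}$ is precisely the specialization $k_1=\cdots=k_r$ of the symmetrized relation \eqref{Dr8}: when all indices coincide, $\Sym_{k_1,\dots,k_r}$ contributes an overall factor $r!$ and, since $\qbinom{r}{t}_{v_i}=\qbinom{r}{r-t}_{v_i}$, the surviving sum is exactly the reduced expression. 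Hence every defining relation of ${}^{\mathrm{Dr}}\U_{red}$ holds in ${}^{\mathrm{Dr}}\U$, giving a surjective algebra map $\pi\colon {}^{\mathrm{Dr}}\U_{red}\to {}^{\mathrm{Dr}}\U$ fixing the generators. To conclude it is an isomorphism, it suffices to build the inverse, i.e.\ to verify that the full relation \eqref{Dr8} already holds in ${}^{\mathrm{Dr}}\U_{red}$.

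That verification is the substance of the proof, and I would carry it out with two tools internal to ${}^{\mathrm{Dr}}\U_{red}$. First, the degree-shift automorphism $T_{\omega_i}$, which shifts the index of $x_{i,k}^{\pm}$ by a fixed amount and fixes $x_{j,l}^{\pm}$ for $j\neq i$ (and similarly $T_{\omega_j}$ shifting only the $l$-index); these are defined on generators and are readily checked to preserve relations \eqref{Dr1}--\eqref{Dr7} and the reduced Serre relation, so from the equal-index relation at one level one obtains it at every level $k$ and every $l$. Second, the current relation \eqref{Dr7}, which trades a degree increment on an $i$-index for one on the $j$-index and is the only mechanism available for producing unequal indices. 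Writing $P^{\pm}(k_1,\dots,k_r;l)$ for the symmetrized left-hand side of \eqref{Dr8}, I would prove $P^{\pm}=0$ by induction on the \emph{spread} $\max_a k_a-\min_a k_a$, the base case (all indices equal) being supplied by the reduced relation together with the degree shifts.

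For the inductive step one applies \eqref{Dr7} to the products $x_{i,k_a}^{\pm}x_{j,l}^{\pm}$ occurring in $P^{\pm}$, rewriting a Serre expression of spread $s$ as a combination of Serre expressions of strictly smaller spread plus correction terms whose coefficients are the Gaussian binomials $\qbinom{r}{t}_{v_i}$. The technical obstacle — and the reason Damiani routes the entire computation through $q$-brackets and generating functions — is showing these corrections cancel: after symmetrization the off-diagonal contributions must telescope, which rests on the $q$-binomial identities hidden in the Serre coefficients together with the compatibility of \eqref{Dr5} and \eqref{Dr7}. I expect this cancellation, whose complexity grows with $1-c_{ij}$, to be the main difficulty. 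A cleaner route I would attempt is the generating-function formulation: form the currents $x_i^{\pm}(z)=\sum_k x_{i,k}^{\pm}z^{k}$, rewrite \eqref{Dr8} as a single rational identity among currents, and use the structure forced by \eqref{Dr5} and \eqref{Dr7} to reduce that identity to its diagonal value $z_1=\cdots=z_r$, which is exactly the equal-index relation.
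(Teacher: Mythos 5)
The easy half of your argument is fine: the equal-index relations are literally the $k_1=\cdots=k_r$ specializations of \eqref{Dr8} (your appeal to $\qbinom{r}{t}_{v_i}=\qbinom{r}{r-t}_{v_i}$ is unnecessary --- symmetrization just contributes an invertible factor $r!$ over $\bQ(v)$), so the identity-on-generators map ${}^{\mathrm{Dr}}\U_{red}\to{}^{\mathrm{Dr}}\U$ is well defined, and you correctly identify that the whole statement reduces to deriving \eqref{Dr8} inside ${}^{\mathrm{Dr}}\U_{red}$. But that derivation, which is the entire substance of the proposition, is never carried out: you write ``I expect this cancellation \dots\ to be the main difficulty'' and leave both of your proposed routes (induction on the spread via \eqref{Dr7}, and a generating-function reduction to the diagonal $z_1=\cdots=z_r$) as programs rather than proofs. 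This is a genuine gap, not a deferred routine verification: it is precisely the content of Damiani's \S 11, and indeed the paper does not reprove it either --- Proposition \ref{prop:Da} is quoted from \cite[Theorem 11.18]{Da12}, with the remark that Damiani's actual theorem is stronger (it also reduces relations \eqref{Dr3} and \eqref{Dr5}). A blind proof attempt therefore has to be measured against Damiani's argument itself, and yours stops exactly where hers begins.

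Two concrete points about the sketch. First, \eqref{Dr7} is not ``the only mechanism available for producing unequal indices'': the workable derivations run through $\operatorname{ad}h_{i,\pm1}$ from \eqref{Dr5}, which acts as a shift derivation and, applied to an equal-index Serre relation, produces a sum of near-diagonal Serre expressions $\sum_a P^{\pm}(k,\dots,k_a+1,\dots,k;l)$ plus a multiple of $P^{\pm}(k,\dots,k;l+1)$; the hard step is then \emph{disentangling} individual off-diagonal terms from such symmetrized sums, which requires combining the $h_{i,1}$- and $h_{i,-1}$-shifted identities and inverting a nontrivial coefficient. (This is exactly the manoeuvre the present paper performs in its $\imath$-analogue: Lemmas \ref{lem:SSS}--\ref{lem:SS}, rewritten as \eqref{gen7} and \eqref{gen6}, are combined with carefully chosen multipliers so that the coefficient of $\bS(w_1,w_2|z)$ factors and can be divided out to give \eqref{gen9} in \S\ref{SeADE}.) Second, your induction on the spread via \eqref{Dr7} alone does not visibly close: the $i=j$ case of \eqref{Dr7} does move between index multisets of the same total degree, but applying it inside the symmetrized sum $P^{\pm}$ produces reordered terms that are not themselves of Serre shape, and the telescoping of these corrections after symmetrization --- the one thing that must be proved --- is only conjectured. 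Without it, the inverse homomorphism is not constructed and the isomorphism is not established.
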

Note Damiani's original result is stronger than the one stated in Proposition \ref{prop:Da}, since it also involves a reduction of relations \eqref{Dr3} and \eqref{Dr5}, but this version is sufficient for our purpose.\par
Compose the isomorphism in Proposition \ref{prop:Da} with $\phi$, we have an isomorphism
\begin{equation}\label{phired}
\phi_{red}:\,^{\mathrm{Dr}}\U_{red}\longrightarrow \U.
\end{equation}

\subsection{Universal $\imath$quantum groups of split affine type}
We recall the definition of the universal $\imath$quantum group of split affine type via its Serre presentation following \cite[\S 3.3]{LW20b}.
\begin{definition}\label{def:tUi}
The universal (split) affine $\imath$quantum group $\tUi:=\tUi(\hg)$ associated to $\hg$ is the $\bQ(v)$-algebra generated by $B_i,\K_i^{\pm 1}, i\in \I$, subject to
\begin{align}
\K_i\K_i^{-1} =\K_i^{-1}\K_i=1, & \quad \K_i  \text{ is central},
\\
B_iB_j -B_j B_i&=0, \quad \qquad\qquad\qquad\qquad\qquad \text{ if } c_{ij}=0,
 \label{eq:S1} \\
B_i^2 B_j -[2]_{v_i} B_i B_j B_i +B_j B_i^2 &= - v_i^{-1}  B_j \K_i,  \quad\qquad\qquad\qquad \text{ if }c_{ij}=-1,
 \label{eq:S2} \\
\sum_{r=0}^3 (-1)^r \qbinom{3}{r}_{v_i} B_i^{3-r} B_j B_i^{r} &= -v_i^{-1} [2]_{v_i}^2 (B_iB_j-B_jB_i) \K_i, \;   \text{ if }c_{ij}=-2,
 \label{eq:S3} \\
\label{eq:S4}
\sum_{s=0}^4(-1)^s
\qbinom{4}{s}_{v_i}
B_{i}^{4-s}B_{j} B_{i}^s &= -v_i^{-1}(1+[3]_{v_i}^2)( B_{j} B_{i}^2+ B_{i}^2 B_{j})\K_i \\\notag
&\quad+v_i^{-1}[4]_{v_i} (1+[2]_{v_i}^2) B_{i} B_{j} B_{i} \K_i \\\notag
& \quad-v_i^{-2}[3]^2_{v_i} B_{j} \K_i^2, \qquad\qquad\quad \text{ if } c_{ij}=-3.
\end{align}
\end{definition}
\begin{remark}\label{rmk:cred}
For any ${\bvs}=(\vs_i)_{i\in\I} \in (\bQ^\times)^\I $, an affine $\imath$quantum group $\Ui_{\bvs}$ with parameters is introduced in \cite{Ko14}, generalizing G. Letzter's work for finite type. $\Ui_{\bvs}$ admits a Serre presentation formulated in \cite[Theorem 7.1]{Ko14} and also in \cite[Theorem 3.1]{CLW18}.
\par
The presentation for $\tUi$ in Definition \ref{def:tUi} can be obtained by replacing the parameter $-v_i^2\vs_i$ in the Serre presentation of $\Ui_{\bvs}$ formulated in \cite[Theorem 3.1]{CLW18} by a central element $\K_i$ for $i\in \I$. (set $\tau=id$ there for split type) Hence, $\Ui_{\bvs}$ is related to $\tUi$ by a central reduction $\Ui_{\bvs}:= \tUi/(\K_i + v_i^2 \vs_i| i\in \I) $.
\end{remark}
\begin{remark} A Serre presentation for $\tUi$ is also formulated with generators $B_i,\widetilde{k}_i,i\in \I$ in \cite[Proposition 6.4]{LW19a} for finite ADE type and in \cite[Theorem 4.2]{LW20a} for symmetric Kac-Moody type. The central element $\K_i$ is related to $\widetilde{k}_i$ by $\K_i=-v_i^2\widetilde{k}_i$. We are following notations in \cite{LW20b} in this paper.
\end{remark}
\begin{remark}\label{deg}
$\tUi$ has a $\Z \I$-grading by setting
\[
\mathrm{wt}(B_i) = \alpha_i, \mathrm{wt}(\K_i)=2\alpha_i,\qquad i\in \I.
\]
We say that $B_i$ has weight $\alpha_i$.
\end{remark}

\begin{remark}\label{filt}
$\tUi$ has a natural filtered algebra structure by setting
\[
\tU^{\imath,m}= \bQ(v)\text{-span}\{B_{i_1} B_{i_2} \cdots B_{i_r}\K_\mu| \mu\in \mathbb{Z} \I, r\leq m , i_k\in\I\}.
\]
According to \cite{Let02,Ko14}, the associated graded algebra with respect to this filtration is
\begin{equation}
\mathrm{gr} \tUi \cong \U^- \otimes \bQ(v)[\K_i^\pm | i\in \I],
\qquad \overline{B_i}\mapsto F_i,  \quad
\overline{\K}_i \mapsto \K_i \; (i\in \I).
\end{equation}
\end{remark}

The following formulas for the braid group action on $\tUi$ of finite ADE type were obtained in \cite{LW21a}; its generalization to Kac-Moody types is conjectured in \cite[Conjecture 6.5]{CLW21} and proved in \cite{LW21b}.

\begin{lemma}[ \text{\cite[Lemma 5.1]{LW21a}, \cite[Conjecture 6.5]{CLW21}, \cite{LW21b}}]
\label{lem:Ti}
For $i\in \I$, there exists an automorphism $\TT_i$ of the $\bQ(v)$-algebra $\tUi$ such that
$\TT_i(\K_\mu) =\K_{s_i\mu}$, and
\[
\TT_i(B_j)= \begin{cases}
B_i \K_i^{-1},  &\text{ if }j=i,\\
B_j,  &\text{ if } c_{ij}=0, \\
B_jB_i-v_i B_iB_j,  & \text{ if }c_{ij}=-1, \\
 [2]_{v_i}^{-1} \big(B_jB_i^{2} -v_i[2]_{v_i} B_i B_jB_i +v^2 B_i^{2} B_j \big) + B_j\K_i,  & \text{ if }c_{ij}=-2,\\
 [3]_{v_i}^{-1}[2]_{v_i}^{-1} \big(B_jB_i^{3} -v_i[3]_{v_i} B_i B_jB_i^2 +v^2[3]_{v_i} B_i^{2} B_jB_i & \\
 -v_i^3 B_i^3 B_j + v_i^{-1}[B_j,B_i]_{v_i^3}\K_i \big)+[B_j,B_i]_{v_i}\K_i, & \text{ if }c_{ij}=-3.
\end{cases}
\]
for $\mu\in \Z\I$ and $j\in \I$.
Moreover,  $\TT_i$ $(i\in \I)$ satisfy the braid relations, i.e., $\TT_i \TT_j =\TT_j \TT_i$ if $c_{ij}=0$, and $\TT_i \TT_j \TT_i =\TT_j \TT_i \TT_j$ if $c_{ij}c_{ji}=1$, and $\TT_i \TT_j \TT_i \TT_j=\TT_j \TT_i \TT_j\TT_i$ if $c_{ij}c_{ji}=2$, and $\TT_i \TT_j \TT_i \TT_j \TT_i \TT_j=\TT_j \TT_i \TT_j\TT_i \TT_j \TT_i $ if $c_{ij}c_{ji}=3$.
\end{lemma}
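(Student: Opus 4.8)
The plan is to treat the two assertions separately: first that each $\TT_i$ is a well-defined $\bQ(v)$-algebra automorphism of $\tUi$, and then that the family $\{\TT_i\}_{i\in\I}$ satisfies the braid relations of $\widetilde{W}$. For the first, I would check that the assignment on generators $B_j \mapsto \TT_i(B_j)$, $\K_\mu \mapsto \K_{s_i\mu}$ extends to an algebra endomorphism by verifying that the images satisfy every defining relation of Definition~\ref{def:tUi}. Centrality of the $\K$'s is immediate because $\TT_i$ merely permutes them. For the Serre relations \eqref{eq:S1}--\eqref{eq:S4}, the only nontrivial checks occur for pairs $(j,k)$ in which at least one index equals $i$ or is a Dynkin neighbor of $i$; for all other pairs $\TT_i$ fixes the relevant generators and the relation is preserved tautologically. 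A useful bookkeeping device throughout is that $\TT_i$ carries a weight-$\lambda$ element to a weight-$s_i\lambda$ element (Remark~\ref{deg}), so both sides of each relation land in a single, predictable weight space.

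To upgrade the endomorphism to an automorphism, I would exhibit an explicit candidate inverse $\TT_i^{-1}$ --- the $\imath$-analogue of Lusztig's inverse braid operator, obtained from the displayed formulas essentially by the substitution $v \mapsto v^{-1}$ together with the appropriate $\K_i$-twist --- and verify $\TT_i \circ \TT_i^{-1} = \TT_i^{-1}\circ \TT_i = \mathrm{id}$ on generators. Since an endomorphism with a two-sided inverse on generators is an automorphism, this settles the first assertion.

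For the braid relations I would use the standard reduction that two automorphisms of $\tUi$ coincide as soon as they agree on the generators $B_k,\K_k$. Thus to prove, say, $\TT_i\TT_j\TT_i = \TT_j\TT_i\TT_j$ when $c_{ij}c_{ji}=1$ (and the length-four and length-six analogues for $c_{ij}c_{ji}=2,3$), it suffices to evaluate both sides on each generator. On $\K_k$ the two products both yield $\K_{w(k)}$ for the same Weyl group element $w$, using $s_is_js_i=s_js_is_j$ and its longer cousins, so these agree automatically. On $B_k$ with $k$ not adjacent to $\{i,j\}$ both operators fix $B_k$ and the identity is trivial. The genuine content is therefore a \emph{finite} computation on $B_i$, $B_j$, and their Dynkin neighbors; I would carry it out after rewriting the formulas of the lemma in the $q$-bracket notation, which keeps the intermediate expressions compact and exposes the cancellations.

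The main obstacle is the case $c_{ij}c_{ji}=3$ (affine $G_2$): here the braid relation has length six and already the single formula for $\TT_i(B_j)$ is a degree-three expression carrying $\K_i$-linear corrections with no counterpart in Lusztig's braid action on $\U$; iterating six times and tracking all the lower-order $\K_i$-terms is a formidable computation. For this reason the conceptual route underlying the cited references is preferable: one realizes $\tUi$ inside the $\imath$Hall algebra of an $\imath$quiver, interprets each $\TT_i$ as the map induced by a BGP-type reflection functor, reads off the stated formulas by applying these functors to the simple modules, and deduces the braid relations from the fact that the reflection functors satisfy them up to natural isomorphism. In that approach I expect the delicate point to be verifying that the reflection functors reproduce \emph{exactly} the $\K_i$-corrected formulas in the $c_{ij}\le -2$ cases, which is precisely where the Hall-algebraic computation must be done with care.
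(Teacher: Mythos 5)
First, a point of calibration: the paper does not prove Lemma~\ref{lem:Ti} at all --- it imports it, as the bracketed attribution indicates, from \cite{LW21a} (finite ADE type), \cite{CLW21} (where the displayed formulas in Kac--Moody type were conjectural), and \cite{LW21b} (where they are proved). So there is no in-paper argument to match, and the relevant comparison is with the cited proof. Your fallback route --- realizing $\tUi$ in the $\imath$Hall algebra, interpreting each $\TT_i$ as induced by a BGP-type reflection functor, and deducing the braid relations from functorial isomorphisms --- is exactly the method of \cite{LW21a,LW21b}, and you correctly identify the delicate point there, namely matching the $\K_i$-corrected formulas for $c_{ij}\le -2$ against the Hall-algebra computation. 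Your guess for the inverse is also consistent with the explicit formulas for $\TT_i^{-1}$ displayed immediately after the lemma, and your weight bookkeeping via Remark~\ref{deg} is sound: since $\mathrm{wt}(\K_i)=2\alpha_i$, the $\K_i$-correction terms do lie in the weight space $s_i(\alpha_j)$, as needed.

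The gap is in your primary, direct route: as written it is a plan whose entire mathematical content is deferred. The two steps carrying all the difficulty --- verifying that the displayed images preserve the inhomogeneous Serre relations \eqref{eq:S2}--\eqref{eq:S4} (whose $\K_i$- and $\K_i^2$-terms have no analogue in Lusztig's setting), and the rank-two braid computations for $c_{ij}c_{ji}=2,3$ --- are exactly the ones you label ``a finite computation'' without performing them or supplying a mechanism that would make them tractable. Moreover, these are not mere evaluations of the lemma's formulas: to compute $\TT_i\TT_j\TT_i(B_k)$ and its longer cousins one must apply $\TT_i$ to \emph{words} in the generators, not to single generators, so one needs systematic control of images of products (in practice, $q$-bracket identities of the type in Lemma~\ref{lem:jac}, or a filtration/PBW argument as in Remark~\ref{filt}) before the claimed cancellations can even be organized. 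Absent that, the direct route is an announcement of a computation rather than a proof --- which is presumably why the automorphism property remained a conjecture in \cite{CLW21} until the reflection-functor proof of \cite{LW21b}, the very route your proposal wisely retreats to for the $G_2$ case.
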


Its inverse $\TT_i^{-1}$ is explicitly given by $\TT_i^{-1} (\K_\mu) =\K_{s_i\mu}$, and
\[
\TT_i^{-1} (B_j)=
\begin{cases}
B_i \K_i^{-1} ,  &\text{ if }j=i,\\
B_j,  &\text{ if } c_{ij}=0, \\
B_iB_j -v_i B_jB_i,  & \text{ if }c_{ij}=-1, \\
 {[}2]_{v_i}^{-1} \big( B_i^{2}B_j-v_i[2]_{v_i} B_i B_j B_i+v_i^2 B_j B_i^{2} \big) +B_j\K_i,  & \text{ if }c_{ij}=-2,\\
 [3]_{v_i}^{-1}[2]_{v_i}^{-1} \big(B_i^{3}B_j -v_i[3]_{v_i} B_i^2 B_j B_i +v^2[3]_{v_i} B_i B_j B_i^{2}&\\
 -v_i^3 B_j B_i^3 + v_i^{-1}[ B_i,B_j]_{v_i^3}\K_i \big)+[B_i,B_j]_{v_i}\K_i, & \text{ if }c_{ij}=-3.
\end{cases}
\]
\begin{remark}
For specific parameters $\bvs=(\vs_i)_{i\in \I},\vs_i=-v_i^{-2},i\in \I$, a braid group action on $\Ui_{\bvs}$ of split finite type is constructed in \cite[Theorem 3.3]{KP11}. By taking the central reduction in Remark \ref{rmk:cred}, $\TT_i$ descends to an automorphism of $\Ui_{\bvs}$, which recovers Kolb and Pellegrini's braid group action.\par

However, for general parameters $\bvs$, $\TT_i$ fails to become an automorphism of $\Ui_{\bvs}$ via the central reduction. (A quick way to see this: since $\TT_i(\K_i)=\K_i^{-1}$, if $\TT_i$ reduces to an automorphism on $\Ui_{\bvs}$, then the image of $\K_i$, as a scalar in $\Ui_{\bvs}$, must be $\pm 1$ and thus $\vs_i=\pm v_i^{-2}$.)\par
A natural generalization of Kolb and Pellegrini's braid group action to the split affine rank one case is formulated in \cite[\S 2]{BK20} for equal parameters $\bvs=(\vs_0,\vs_1),\vs_0=\vs_1$.
\end{remark}
For $w\in \widetilde{W}$ with a reduced expression $w =\sigma s_{i_1} \ldots s_{i_r},\sigma \in \Omega$, we define $\TT_w = \sigma \TT_{i_1} \ldots \TT_{i_r}$, where $\sigma$ acts on $\tUi$ by $\sigma(B_i) =B_{\sigma i}, \sigma(\K_i) =\K_{\sigma i}$, for all $i\in \I$. By Lemma~\ref{lem:Ti}, $\TT_w$ is independent of the choice of reduced expressions for $w$.
\par
The first property for this braid group action can be obtained by adapting \cite[\S2.7]{Lus89} to our setting.
\begin{lemma}
   \label{lem:Lus}
Let $x\in P$, $i, j \in \I_0$.
\begin{itemize}
\item[(a)]
If $s_i x=x s_i$, then $\TT_i \TT_x=\TT_x \TT_i$.
\item[(b)]
If $s_i x s_i= \alpha_i^{-1}x=\prod_{k\in \I_0} \omega_k^{a_k}$, then we have
$\TT_i^{-1}\TT_x\TT_i^{-1}=\prod_{k\in \I_0} \TT_{\omega_k}^{a_k}$, in particular, $\TT_i^{-1}\TT_{\omega_i}\TT_i^{-1}=\TT_{\omega_i}^{-1}\prod_{k\neq i}\TT_{\omega_k}^{-c_{ik}}$.
\item[(c)]
$\TT_{\omega_i} \TT_{\omega_j}  = \TT_{\omega_j} \TT_{\omega_i}.$
\end{itemize}
\end{lemma}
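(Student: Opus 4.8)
The plan is to deduce all three identities from the combinatorics of $\widetilde{W}$ together with the formal properties of the operators $\TT_w$, adapting \cite[\S2.7]{Lus89}. By Lemma~\ref{lem:Ti} the $\TT_i$ satisfy the braid relations of $\widetilde{W}$ and each $\sigma\in\Omega$ acts with $\sigma\TT_i\sigma^{-1}=\TT_{\sigma i}$, so $w\mapsto\TT_w$ factors through the braid group of $\widetilde{W}$; concatenating reduced expressions (and using the independence of $\TT_w$ from the chosen reduced expression noted after Lemma~\ref{lem:Ti}) yields the multiplicativity $\TT_{w_1w_2}=\TT_{w_1}\TT_{w_2}$ whenever $l(w_1w_2)=l(w_1)+l(w_2)$, which I will use throughout. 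The other input is the standard descent criterion $l(s_iw)>l(w)\Leftrightarrow w^{-1}(\alpha_i)\in\cR^+$ together with its right-handed form; for a translation $w=x\in P$ the $\widetilde{W}$-action gives $x^{-1}(\alpha_i)=\alpha_i+\langle x,\alpha_i\rangle\delta$ and $x(\alpha_i)=\alpha_i-\langle x,\alpha_i\rangle\delta$, so these signs are controlled by $\langle x,\alpha_i\rangle$ alone.

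Parts (a) and (c) then fall out. For (c), the length of a translation by a dominant weight is linear in that weight, so $l(\omega_i\omega_j)=l(\omega_i)+l(\omega_j)$ and hence $\TT_{\omega_i}\TT_{\omega_j}=\TT_{\omega_i\omega_j}$; since $P$ is abelian the right side is symmetric in $i,j$. For (a), $s_ix=xs_i$ forces $\langle x,\alpha_i\rangle=0$, so $x^{-1}(\alpha_i)=\alpha_i\in\cR^+$ and $x(\alpha_i)=\alpha_i\in\cR^+$; thus $s_i$ is both a left and a right ascent of $x$, and multiplicativity gives
\[
\TT_i\TT_x=\TT_{s_ix}=\TT_{xs_i}=\TT_x\TT_i .
\]

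For (b), the hypothesis reads $s_i(x)=x-\alpha_i$, forcing $\langle x,\alpha_i\rangle=1$; writing $x'=s_ixs_i=s_i(x)$ as a translation, the descent computation gives $x^{-1}(\alpha_i)=\alpha_i+\delta\in\cR^+$ and, using that $s_i$ negates the pairing with $\alpha_i$, $x'(\alpha_i)=\alpha_i+\delta\in\cR^+$. So $s_i$ is a left ascent of $x$ and a right ascent of $x'$, and from $s_ix=x's_i$ and multiplicativity I obtain the ``honest'' identity $\TT_{x'}=\TT_i\TT_x\TT_i^{-1}$. The point is that the lemma instead asks for the product $\prod_k\TT_{\omega_k}^{a_k}$ of the commuting operators from (c), and this does \emph{not} coincide with $\TT_{x'}$ once $x'=\prod_k\omega_k^{a_k}$ leaves the dominant cone (here $a_i=-1$, $a_k=-c_{ik}$ for $k\neq i$, read off from $x-\alpha_i$): the assignment $x\mapsto\TT_x$ is multiplicative only on the dominant cone, so $\TT_{\omega_k}^{-1}\neq\TT_{\omega_k^{-1}}$ in general. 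Via the honest identity, (b) is therefore equivalent to the conversion $\TT_{x'}=\TT_i^{2}\prod_k\TT_{\omega_k}^{a_k}$ between the honest translation operator and the fundamental-weight product; I would prove this by inducting on the number of negative coordinates $a_k$, stripping off one simple reflection at a time by the descent rule and re-expressing via multiplicativity. The ``in particular'' case is $x=\omega_i$.

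I expect this last conversion to be the main obstacle. Parts (a), (c) and the honest identity $\TT_{x'}=\TT_i\TT_x\TT_i^{-1}$ are each a one-line consequence of multiplicativity and the descent criterion, but identifying the conjugate $\TT_i^{-1}\TT_x\TT_i^{-1}$ with the abelian product $\prod_k\TT_{\omega_k}^{a_k}$ is genuinely not formal, precisely because $x\mapsto\TT_x$ fails to be a homomorphism off the dominant cone; it is the careful length bookkeeping of the length-zero ($\Omega$) part that constitutes the real content of \cite[\S2.7]{Lus89}, transported here to $\tUi$.
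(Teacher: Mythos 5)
Your parts (a) and (c) are correct, and so is your ``honest'' identity $\TT_i\TT_x=\TT_{x'}\TT_i$ for $x'=s_ixs_i$: the descent computations ($t_x^{-1}(\alpha_i)=\alpha_i+\langle x,\alpha_i\rangle\delta$, etc.) and the length-additivity of dominant translations are exactly the inputs of \cite[\S2.7]{Lus89}, which is all the paper itself offers for this lemma (it gives no written proof, only the citation). So up to the reduction of (b) to your ``conversion'' $\TT_{x'}=\TT_i^{2}\prod_k\TT_{\omega_k}^{a_k}$, your route agrees with the intended one.

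The gap is in (b), and it is not merely that the induction is left as a plan: the conversion, in the generality in which you pose it (with $\TT_x$ the reduced-expression operator for \emph{arbitrary} $x\in P$ satisfying $\langle x,\alpha_i\rangle=1$), is false, so no induction can close it. Take $\hg=\widehat{\mathfrak{sl}}_3$, $i=1$, $x=\omega_1-\omega_2$, so that $x-\alpha_1=-\omega_1$, i.e.\ $a_1=-1$, $a_2=0$. Writing $\tau\in\Omega$ for the rotation with $\tau\TT_j\tau^{-1}=\TT_{j+1}$ (indices mod $3$), one has the reduced expressions $t_{\omega_1}=\tau s_2s_1$, $t_{\omega_2}=\tau^2 s_1s_2$, and hence $t_x=t_{\omega_1}t_{\omega_2}^{-1}=\tau^2 s_0s_1$, which is reduced since $l(t_x)=2$. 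Therefore
\[
\TT_1^{-1}\TT_x\TT_1^{-1}=\TT_1^{-1}\tau^2\TT_0\TT_1\TT_1^{-1}=\tau^2\TT_2^{-1}\TT_0,
\qquad
\TT_{\omega_1}^{-1}=\TT_1^{-1}\TT_2^{-1}\tau^{-1}=\tau^2\TT_2^{-1}\TT_0^{-1},
\]
and these differ as automorphisms of $\tUi$ because $\TT_0^2\neq\mathrm{id}$ (e.g.\ $\TT_0^2(B_j)$ for $c_{0j}=-1$ produces nonzero cubic terms in $\mathrm{gr}\,\tUi$, so it is not $B_j$). The resolution is the one you yourself flag but then do not adopt: off the dominant cone, $\TT_x$ in (b) must be read as the \emph{homomorphic extension} $\TT_x:=\TT_y\TT_z^{-1}$ for $x=y-z$ with $y,z$ dominant (as in \cite[\S 3]{Be94}), which for non-dominant $x$ is a different operator from the reduced-word one, as the example above shows. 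With that reading the correct scheme — and the actual content of \cite[\S2.7]{Lus89} — is: first prove the dominant case by length bookkeeping (for dominant $x$ with $\langle x,\alpha_i\rangle=1$ one has $l(xs_i)=l(x)-1$ and $l(s_ixs_i)=l(x)$, from which the identity against $\prod_k\TT_{\omega_k}^{a_k}$ is extracted), then reduce a general $x$ to this case by choosing a dominant $z$ with $\langle z,\alpha_i\rangle=0$ and $x+z$ dominant, using (a) to commute $\TT_z$ past $\TT_i$ and (c) to split it off — not by stripping simple reflections from a non-dominant translation. Note also that the paper only ever invokes (b) with $x=\omega_i$ dominant, and even in that case the conversion $\TT_{x'}=\TT_i^2\prod_k\TT_{\omega_k}^{a_k}$ (true there, as a rank-two check confirms) is precisely the step your proposal does not supply.
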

For $i \in \I$, just as in \cite[\S 3]{Be94}, let $\omega_i'= \omega_i  s_i$ and $\tUi_{[i]}$ be the subalgebra of $\tUi$ generated by
$B_i,  \TT_{\omega'_i}(B_i), \K_i, \K_{\delta-\alpha_i}.$ 
Since $l(\omega'_i)=l(\omega_i)-1$, we have
\begin{equation}
\TT_{\omega_i}=\TT_{\omega'_i} \TT_i.
\end{equation}

The following properties for this braid group action on $\tUi$ are natural generalizations of corresponding results formulated in \cite[\S 3.3]{LW20b}. 
\begin{lemma}[ \text{\cite[Lemma 3.5-3.6 and Proposition 3.9]{LW20b}} ]\label{lem:bra} Let $i\in \I$.
\begin{itemize}

\item[(a)] We have $\TT_w (B_i) = B_{w i}$, for any $w \in W$ such that $wi \in \I$.

\item[(b)] We have $\TT_{\omega_j}(x)=x$, for any $j\neq i$ and $x\in\tUi_{[i]}$.

\item[(c)] There exists a $\bQ(v)$-algebra isomorphism $\aleph_i: \tUi(\widehat{\mathfrak{sl}}_2) \rightarrow \tUi_{[i]}$, which sends $B_1 \mapsto B_i, B_0 \mapsto \TT_{\omega_i'} (B_i), \K_1 \mapsto \K_i, \K_0 \mapsto \K_\delta \K_i^{-1}$.
\end{itemize}
\end{lemma}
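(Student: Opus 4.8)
The three parts build on one another, so I would establish them in the stated order: (a) is the combinatorial core, (b) a short deduction from (a) and Lemma~\ref{lem:Lus}, and (c) the construction whose well-definedness is the genuine obstacle.

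For (a), with $w\in W$ and $w\alpha_i=\alpha_j$ both simple, I would induct on $\ell(w)$. Since $w\alpha_i>0$, every right descent $s_p$ (one with $w\alpha_p<0$) satisfies $p\neq i$; writing $w=w's_p$ with $\ell(w')=\ell(w)-1$ gives $\TT_w=\TT_{w'}\TT_p$. If $c_{pi}=0$ then $\TT_p(B_i)=B_i$ and $s_p\alpha_i=\alpha_i$, so $w'\alpha_i=w\alpha_i=\alpha_j$ and the inductive hypothesis for $w'$ finishes this case at once. The real difficulty is when every right descent $s_p$ is adjacent to $i$, so that $\TT_p(B_i)$ is one of the complicated expressions of Lemma~\ref{lem:Ti} rather than a single $B$. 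I would handle this by the standard reduction to rank-two parabolic subgroups (as for the analogous statement for $\U$ in the work of Lusztig and Beck): the passage from $\alpha_i$ to $\alpha_j$ is localized inside the finite dihedral group $\langle s_i,s_p\rangle$, reducing the claim to a finite list of rank-two identities $\TT_u(B_i)=B_{ui}$. These I would verify directly from the explicit $\TT_i^{\pm1}$ formulas of Lemma~\ref{lem:Ti} and the finite-type Serre relations \eqref{eq:S1}--\eqref{eq:S4}, the case $c_{ij}c_{ji}=3$ being the most laborious. The statement then extends verbatim from $W$ to $\widetilde W$ by writing an element as $\tau w$ with $\tau\in\Omega$ and using that $\tau$ permutes the $B_k$ along the diagram automorphism.

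For (b) it suffices to show $\TT_{\omega_j}$ ($j\neq i$) fixes each of the four generators $B_i$, $\TT_{\omega_i'}(B_i)$, $\K_i$, $\K_{\delta-\alpha_i}$ of $\tUi_{[i]}$. As $\omega_j\in\widetilde W$ acts on weights by $\omega_j(\alpha_i)=\alpha_i-\delta_{ij}\delta$, it fixes $\alpha_i$ and $\delta-\alpha_i$; hence $\TT_{\omega_j}$ fixes $\K_i$ and $\K_{\delta-\alpha_i}$, while the $\widetilde W$-version of (a) gives $\TT_{\omega_j}(B_i)=B_i$ since $\omega_j i=i$. For the remaining generator I would write $\TT_{\omega_i'}=\TT_{\omega_i}\TT_i^{-1}$ and use $\TT_{\omega_j}\TT_{\omega_i}=\TT_{\omega_i}\TT_{\omega_j}$ from Lemma~\ref{lem:Lus}(c), so that $\TT_{\omega_j}\TT_{\omega_i'}(B_i)=\TT_{\omega_i}\TT_{\omega_j}\TT_i^{-1}(B_i)$; since $\TT_i^{-1}(B_i)=B_i\K_i^{-1}$ is itself fixed by $\TT_{\omega_j}$, this collapses back to $\TT_{\omega_i}\TT_i^{-1}(B_i)=\TT_{\omega_i'}(B_i)$, as required.

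For (c) I would define $\aleph_i$ on generators as stated and argue in three steps. First, well-definedness: the images $\K_i$ and $\K_\delta\K_i^{-1}=\K_{\delta-\alpha_i}$ are central and invertible, so the only relations to verify are the two split affine $\mathfrak{sl}_2$ Serre relations of type \eqref{eq:S3} between $B_i$ and $\TT_{\omega_i'}(B_i)$. This is the main obstacle of the whole lemma; I would carry it out by expanding $\TT_{\omega_i'}(B_i)$ through Lemma~\ref{lem:Ti}, rewriting everything with $q$-brackets, and applying the defining relations of $\tUi$ together with the braid relations to reduce to a finite-type identity, with $\TT_{\omega_i}$-equivariance yielding the second Serre relation from the first. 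Second, surjectivity is automatic, since $\tUi_{[i]}$ is by definition generated by the four images. Third, for injectivity I would use the filtration of Remark~\ref{filt}: the map $\aleph_i$ is filtered, and on associated graded it becomes a homomorphism $\mathrm{gr}\,\tUi(\widehat{\mathfrak{sl}}_2)\to\mathrm{gr}\,\tUi(\hg)$ which, on the $\U^-$ factors, sends $\overline{B_1}\mapsto F_i$ and $\overline{B_0}$ to the real root vector of weight $\delta-\alpha_i$ that is the leading term of $\TT_{\omega_i'}(B_i)$. Since $\{\pm\alpha_i+k\delta\}\cup\{k\delta\}$ is an affine $\mathfrak{sl}_2$ subsystem with simple roots $\alpha_i,\delta-\alpha_i$, this graded map is the inclusion of the corresponding rank-one affine subalgebra of $\U^-(\hg)$, which is injective by Beck's computation of braid operators on $\U^-$; hence $\aleph_i$ is injective and therefore an isomorphism.
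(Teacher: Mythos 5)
First, a point of comparison: the paper does not prove this lemma at all. It is quoted from \cite{LW20b} (Lemma 3.5--3.6 and Proposition 3.9 there), with the surrounding text asserting that the split BCFG cases are ``natural generalizations''; so your attempt must be measured against Lu--Wang's arguments rather than anything internal to this paper. Against that benchmark, your part (b) is correct and essentially complete: weight considerations fix $\K_i$ and $\K_{\delta-\alpha_i}$, the $\widetilde{W}$-version of (a) handles $B_i$, and the manipulation $\TT_{\omega_j}\TT_{\omega_i}\TT_i^{-1}(B_i)=\TT_{\omega_i}\TT_{\omega_j}(B_i\K_i^{-1})=\TT_{\omega_i'}(B_i)$ via Lemma~\ref{lem:Lus}(c) is exactly the expected deduction. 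Your part (a) follows the standard Lusztig-style reduction to rank two, which is the right skeleton; two caveats are that $\langle s_i,s_p\rangle$ is \emph{infinite} dihedral in affine $A_1$ (harmless only because the stabilizer of $\alpha_1$ in $W$ is then trivial, which deserves a remark), and that the rank-two identities for $c_{ij}c_{ji}\in\{2,3\}$ are precisely the heavy new content beyond ADE --- in the literature these are obtained via the $\imath$Hall algebra realization \cite{LW21b} rather than by the brute-force $q$-bracket computations you propose, though the latter are in principle possible.

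The genuine gap is in the well-definedness step of (c). ``Expanding $\TT_{\omega_i'}(B_i)$ through Lemma~\ref{lem:Ti}'' is not an available move: $\omega_i'$ is a translation-type element of large length, $\TT_{\omega_i'}$ is a long composite of the rank-one operators, and there is no closed-form expansion to feed into the finite-type Serre relations. The verification that $B_i$ and $\TT_{\omega_i'}(B_i)$ satisfy the two affine rank-one relations of type \eqref{eq:S3} is exactly the rank-one theorem of Baseilhac--Kolb \cite{BK20} (a delicate induction on root vectors), which Lu--Wang then transport into $\tUi_{[i]}$ --- and indeed this paper's introduction flags that the rank-one construction rests on \cite{BK20}. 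Moreover, your claim that ``$\TT_{\omega_i}$-equivariance yields the second Serre relation from the first'' fails as stated: applying $\TT_{\omega_i}^{\pm 1}$ to the first relation produces relations among $B_{i,\mp 1}$ and $\TT_i^{-1}(B_i)=B_i\K_i^{-1}$, not the relation with the roles of $B_1$ and $B_0$ exchanged; unwinding the second relation by $\TT_{\omega_i'}^{-1}$ instead leaves you with a relation involving $\TT_{\omega_i'}^{-1}(B_i)$, so one needs either an anti-involution compatible with the braid operators (sending $\TT_w$ to $\TT_{w^{-1}}^{-1}$) or, again, the full rank-one result of \cite{BK20}, which supplies both relations at once. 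Your injectivity argument via the filtration of Remark~\ref{filt} and Beck's embedding on the associated graded is the right idea and matches the known proof, though the identification of the gr-image of $\TT_{\omega_i'}(B_i)$ with the root vector of weight $\delta-\alpha_i$ is itself a statement requiring proof (compare Remark~\ref{rmk:spe}), not a given.
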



\section{Drinfeld type presentations for affine $\imath$quantum groups}\label{Dpr}
\subsection{A reduced Drinfeld type presentation for $\tUi$ of split affine type}\label{ADE}
New generators $B_{i,k},\TH_{i,m}$ are introduced in \cite[(3.28)-(3.30)]{LW20b} for $\tUi$ of split affine ADE type. We define elements $B_{i,k},\TH_{i,m}$ basically in the same way for $\tUi$ of arbitrary split affine type. \par
Define a sign function
\[
o(\cdot): \I \longrightarrow \{\pm 1\},
\]
such that $o(i) o(j)=-1$ whenever $c_{ij} <0$.\par
Define elements $B_{i,k},\acute{\Theta}_{i,m},\TH_{i,m}$ in $\tUi$ for $i\in \I_0 $, $k\in \Z$ and $m\ge 1$ by
\begin{align}
B_{i,k} &= o(i)^k \TT_{\omega_i}^{-k} (B_i),
  \label{Bik} \\
\acute{\Theta}_{i,m} &=  o(i)^m \Big(-B_{i,m-1} \TT_{\omega_i'} (B_i) +v_i^{2} \TT_{\omega_i'} (B_i) B_{i,m-1}
\label{Thim1} \\
& \qquad\qquad\qquad\qquad + (v_i^{2}-1)\sum_{p=0}^{m-2} B_{i,p} B_{i,m-p-2}  \K_{i}^{-1}\K_{\delta} \Big),
\notag \\
\TH_{i,m} &=\acute{\Theta}_{i,m} - \sum\limits_{a=1}^{\lfloor\frac{m-1}{2}\rfloor}(v_i^2-1) v_i^{-2a} \acute{\Theta}_{i,m-2a}\K_{a\delta} -\delta_{m,ev} v_i^{1-m} \K_{\frac{m}{2}\delta}.
\label{Thim}
\end{align}
In particular, $B_{i,0}=B_i$. $B_{i,k},\TH_{i,l}$ are homogeneous with respect to $\Z \I$-grading on $\tUi$ with weights
\[
\mathrm{wt} (B_{i,k})=\alpha_i + k\delta,\qquad \mathrm{wt}(\TH_{i,l})=l\delta.
\]
Set ${\Theta}_{i,0} =(v_i-v_i^{-1})^{-1},$ and ${\Theta}_{i,m} =0,$ for $m<0.$

With root vectors defined above, a Drinfeld type presentation for the affine $\imath$quantum group of split ADE type is introduced in \cite[\S 3.4]{LW20b}.
By replacing $v$ by $v_i$ and adding the equal-index version of Serre relations, a current presentation for $\tUi$ of arbitrary split affine type is given in Definition \ref{def:Dpr}. We call it a reduced Drinfeld type presentation for $\tUi$, since it is an $\imath$analogue of reduced Drinfeld presentation $\U_{red}$ for affine quantum groups.

\begin{definition}\label{def:Dpr}
Let $ \tUiD_{red} $ be the $\bQ(v)$-algebra generated by $\K_{i}^{\pm1}$, $C^{\pm1}$, $H_{i,m}$ and $B_{i,l}$, where  $i\in \I$, $m \in \Z_{\geq 1}$, $l\in\Z$, subject to the following relations, for $m,n \in \Z_{\geq1}$ and $k,l\in \Z$:
\begin{align}
& \K_i, C \text{ are central, }\quad
[H_{i,m},H_{j,n}]=0, \quad \K_i\K_i^{-1}=1, \;\; C C^{-1}=1,\label{iDRG0}
\\
&[H_{i,m},B_{j,l}]=\frac{[mc_{ij}]_{v_i}}{m} B_{j,l+m}-\frac{[mc_{ij}]_{v_i}}{m} B_{j,l-m}C^m,\label{iDRG1'}
\\
&[B_{i,k}, B_{j,l+1}]_{v_i^{-c_{ij}}}  -v_i^{-c_{ij}} [B_{i,k+1}, B_{j,l}]_{v_i^{c_{ij}}}=0, \text{ if }i\neq j,\label{iDRG2'}
 \\ %
&[B_{i,k}, B_{i,l+1}]_{v_i^{-2}}  -v_i^{-2} [B_{i,k+1}, B_{i,l}]_{v_i^{2}}
=v_i^{-2}\Theta_{i,l-k+1} C^k \K_i-v_i^{-4}\Theta_{i,l-k-1} C^{k+1} \K_i\label{iDRG3'}
\\
&\qquad\qquad\qquad\qquad\qquad\qquad\quad\quad\quad
  +v_i^{-2}\Theta_{i,k-l+1} C^l \K_i-v_i^{-4}\Theta_{i,k-l-1} C^{l+1} \K_i, \notag
\\\label{iDRG34}
&[B_{i,k} ,B_{j,l}]=0,\qquad   \text{ if }c_{ij}=0,
\\
&   \sum_{s=0}^2(-1)^s
\qbinom{2}{s}_{v_i}
B_{i,k}^{2-s}B_{j,l} B_{i,k}^s =-v_i^{-1}   B_{j,l} \K_i C^k,\qquad\text{ if } c_{ij}=-1,\label{iDRG4'}
\\
\label{iDRG5'}
&\sum_{s=0}^3(-1)^s
\qbinom{3}{s}_{v_i}
B_{i,k}^{3-s}B_{j,l} B_{i,k}^s =-v_i^{-1} [2]_{v_i}^2 (B_{i,k} B_{j,l}-B_{j,l} B_{i,k})\K_i C^k,\qquad\text{ if } c_{ij}=-2,
\\\label{iDRG6'}
&\sum_{s=0}^4(-1)^s
\qbinom{4}{s}_{v_i}
B_{i,k}^{4-s}B_{j,l} B_{i,k}^s = -v_i^{-1}(1+[3]_{v_i}^2)( B_{j,l} B_{i,k}^2+ B_{i,k}^2 B_{j,l})\K_i C^k\\\notag
&\qquad  +v_i^{-1}[4]_{v_i} (1+[2]_{v_i}^2) B_{i,k} B_{j,l} B_{i,k} \K_i C^k-v_i^{-2}[3]^2_{v_i} B_{j,l} \K_i^2 C^{2k} ,\qquad\text{ if } c_{ij}=-3,
\end{align}
where $\Theta_{i,m}$ are related to $H_{i,m}$ by the following functional equation:
\begin{align}\label{H-TH}
1+ \sum_{m\geq 1} (v_i-v_i^{-1})\Theta_{i,m} u^m  = \exp\Big( (v_i-v_i^{-1}) \sum_{m\geq 1} H_{i,m} u^m \Big).
\end{align}
\end{definition}

\begin{theorem}\label{DprADE}
There is a $\bQ(v)$-algebra isomorphism $\Phi_{red}: \tUiD_{red} \to \tUi$, which sends
\begin{align*}
B_{i,k} \mapsto B_{i,k},\quad H_{i,k} \mapsto H_{i,k},\quad \TH_{i,k}\mapsto \TH_{i,k},\quad \K_i \mapsto \K_i, \quad C \mapsto \K_\delta,
\end{align*}
for $i\in \I_0,k\in \Z, m\geq 1$.
\end{theorem}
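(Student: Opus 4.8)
The plan is to construct $\Phi_{red}$ in three stages: well-definedness (the images of the generators satisfy the defining relations of $\tUiD_{red}$), surjectivity, and injectivity. For well-definedness, since $\tUiD_{red}$ is given by generators and relations, it suffices to verify that the concrete elements $B_{i,k},\TH_{i,m},\K_i,\K_\delta\in\tUi$ of \eqref{Bik}--\eqref{Thim} satisfy \eqref{iDRG0}--\eqref{iDRG6'}. I would first dispose of the routine relations. The centrality and $[H_{i,m},H_{j,n}]=0$ statements in \eqref{iDRG0}, the $c_{ij}=0$ commutation \eqref{iDRG34}, the loop relations \eqref{iDRG2'}--\eqref{iDRG3'}, and \eqref{iDRG1'} in the diagonal case $i=j$ all live inside the rank-one subalgebras $\tUi_{[i]}$; they follow by transporting Lu--Wang's rank-one identities through the isomorphism $\aleph_i$ of Lemma~\ref{lem:bra}(c) and spreading them across indices by the commuting degree-shift automorphisms $\TT_{\omega_j}$ (Lemma~\ref{lem:Lus}(c), Lemma~\ref{lem:bra}(b)), exactly as in \cite[\S4]{LW20b}. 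The equal-index Serre relations \eqref{iDRG4'}--\eqref{iDRG6'} are obtained most cleanly by degree shift: applying $\TT_{\omega_i}^{-k}$ followed by $\TT_{\omega_j}^{-l}$ to the finite-type Serre relations \eqref{eq:S2}--\eqref{eq:S4}, and using $\TT_{\omega_i}^{-k}(B_i)=o(i)^{-k}B_{i,k}$, $\TT_{\omega_i}^{-k}(B_j)=B_j$ for $j\neq i$, together with $\TT_{\omega_i}^{-k}(\K_i)=\K_i\K_\delta^{k}$ and the $\TT$-invariance of $\K_\delta$, reproduces \eqref{iDRG4'}--\eqref{iDRG6'} with the central factors $C^{k}=\K_\delta^{k}$ in place.

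The hard part, and the step I expect to be the main obstacle, is relation \eqref{iDRG1'} for $i\neq j$: the commutator of the imaginary root vector $H_{i,m}$ with a real root vector $B_{j,l}$ at a different node. The rank-one transport no longer applies, and Lu--Wang's derivation for $c_{ij}=-1$ used the full (non-reduced) Serre relation, which is both unavailable and prohibitively complicated for $c_{ij}<-1$. My plan is to prove \eqref{iDRG1'} by induction on $m$: first establish in \S\ref{Verf1} the recursive formula \eqref{recur}, valid uniformly in $c_{ij}$, which reduces $[H_{i,m},B_{j,l}]$ to lower values of $m$; then settle the base case in \S\ref{Verf2} separately for each $c_{ij}\in\{-1,-2,-3\}$, using the finite-type Serre relation for $c_{ij}=-1$, the explicit braid-group formulas of Lemma~\ref{lem:Ti} for $c_{ij}=-2$, and a combination of both for $c_{ij}=-3$. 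Throughout I would keep the Serre relations and braid formulas in compact $q$-bracket form to control the computation.

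For surjectivity, the image of $\Phi_{red}$ contains $B_{i,0}=B_i$ and $\K_i$ for all $i\in\I_0$, and $C=\K_\delta$; since $\delta=\alpha_0+\theta$ with $\theta=\sum_{i\in\I_0}a_i\alpha_i$, the element $\K_0=\K_\delta\prod_{i\in\I_0}\K_i^{-a_i}$ is recovered, so every $\K_i^{\pm1}$ ($i\in\I$) lies in the image. It remains to produce the affine generator $B_0$. Following Beck's recovery of the affine Chevalley generators from the loop generators \cite{Be94}, adapted to the $\imath$-setting in \cite[\S3.3]{LW20b}, $B_0$ is expressed through the real root vectors $B_{i,k}$ and the braid group action (Lemma~\ref{lem:bra}); this places $B_0$ in the image and shows that $\Phi_{red}$ is surjective.

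For injectivity I would pass to associated graded algebras. Under the filtration of Remark~\ref{filt}, $\mathrm{gr}\,\tUi\cong\U^-\otimes\bQ(v)[\K_i^{\pm}]$, and $\Phi_{red}$ is filtered: the real root vectors $B_{i,k}$ have filtration degree $1$, while the imaginary root vectors $\TH_{i,m}$, being quadratic in the $B_{i,k}$ by \eqref{Thim1}, have degree $2$. On the graded level the relations \eqref{iDRG2'}--\eqref{iDRG6'} degenerate to a Drinfeld-type loop presentation of $\U^-$, for which the equal-index Serre relations already suffice by Damiani's reduction (Proposition~\ref{prop:Da} applied to ${}^{\mathrm{Dr}}\U^-$); this is precisely what legitimizes the reduced presentation. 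Thus the relations let me straighten arbitrary words into an ordered (PBW-type) spanning set of $\tUiD_{red}$ in the $B_{i,k}$ and $\TH_{i,m}$ whose image under $\Phi_{red}$ is a PBW basis of $\tUi$ obtained from the $\imath$Hall algebra approach (\cite{LRW20}, \cite{LW20b}). A spanning set mapping onto a linearly independent set is mapped injectively, so $\Phi_{red}$ is injective; together with surjectivity this proves that $\Phi_{red}$ is an isomorphism.
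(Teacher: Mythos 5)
Your overall strategy coincides with the paper's: the routine relations via rank-one transport through $\aleph_i$ and the degree-shift automorphisms, the equal-index Serre relations \eqref{iDRG4'}--\eqref{iDRG6'} by applying $\TT_{\omega_i}^{-k}\TT_{\omega_j}^{-l}$ to \eqref{eq:S2}--\eqref{eq:S4}, the identification of \eqref{iDRG1'} for $i\neq j$ as the crux and its proof by the recursion \eqref{recur} with base cases split by $c_{ij}$ (finite-type Serre for $c_{ij}=-1$, braid formulas for $c_{ij}=-2$, both for $c_{ij}=-3$), and surjectivity/injectivity by adapting the argument of \cite[Theorem 3.13]{LW20b} with Damiani's reduced presentation $^{\mathrm{Dr}}\U_{red}$ substituted for $^{\mathrm{Dr}}\U$. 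This is exactly the route the paper takes.

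There are, however, two localized missteps in your well-definedness paragraph. First, you file $[H_{i,m},H_{j,n}]=0$ among the relations that ``live inside the rank-one subalgebras $\tUi_{[i]}$'' --- this is false for $i\neq j$, since $H_{i,m}$ and $H_{j,n}$ lie in \emph{different} rank-one subalgebras and no single $\aleph_i$ sees both; the paper instead derives \eqref{iDRG0} for $i\neq j$ from \eqref{iDRG1'} and \eqref{iDRG3'} (as in \cite[\S 4.5]{LW20b}), which forces it to come \emph{after} the hard relation \eqref{iDRG1'} is established, a point of logical ordering your sketch misses. Second, your blanket appeal to ``exactly as in \cite[\S 4]{LW20b}'' for \eqref{iDRG2'} does not cover the new cases $c_{ij}\in\{-2,-3\}$, which do not occur in ADE type; the paper handles these by the symmetry observation that $d_ic_{ij}=d_jc_{ji}$ gives $v_i^{c_{ij}}=v_j^{c_{ji}}$, so the left-hand side of \eqref{iDRG2'} is symmetric in $(i,k)\leftrightarrow(j,l)$ and the verification reduces to $c_{ij}\in\{-1,0\}$, where the Lu--Wang argument applies. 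Neither gap dooms the proposal --- both repairs are short and the main engine (Section \ref{verf}) is correctly conceived --- but as written, the plan for these two relations would not go through.
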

\begin{proof}Most defining relations for $^{Dr}\tUi_{red}$ are verified in $\tUi$ in similar ways as \cite{LW20b}, except the relation \eqref{iDRG1'} for $i\neq j$. We postpone details of the proof of this relation to Section \ref{verf}.\par
Relations \eqref{iDRG0}-\eqref{iDRG1'} for $i=j$ and the relation \eqref{iDRG3'} follow from Lemma \ref{lem:bra}(c) and the rank one computation offered by Lu and Wang; see \cite[Theorem 2.16]{LW20b} for a summary.\par The relation \eqref{iDRG2'} is proved as follows. Since $d_ic_{ij}=d_j c_{ji},$ we have $v_i^{c_{ij}}=v_j^{c_{ji}}$. Then the LHS of the relation \eqref{iDRG2'} is symmetric with respect to $(i,k)$ and $(j,l)$, i.e., we have
\[
[B_{i,k}, B_{j,l+1}]_{v_i^{-c_{ij}}}  - v_i^{-c_{ij}} [B_{i,k+1}, B_{j,l}]_{v_i^{c_{ij}}}=
[B_{j,l}, B_{i,k+1}]_{v_j^{-c_{ji}}}  - v_j^{-c_{ji}} [B_{j,l+1}, B_{i,k}]_{v_j^{c_{j i}}}.
\]
Hence, it suffices to prove the relation \eqref{iDRG2'} for $c_{ij}=-1,0$. For these two cases, \eqref{iDRG2'} is verified in the same way as \cite[\S 4.2]{LW20b} with the help of Lemma \ref{lem:Lus} and Lemma \ref{lem:bra}(b).\par
The verification of the relation \eqref{iDRG1'} for $i\neq j$ is given in Section \ref{verf}, using other defining relations \eqref{iDRG2'}-\eqref{iDRG3'}; note that proofs for these two relations, as provided above, do not need the relation \eqref{iDRG1'}, and hence we did not run into a circular. \par
The relation \eqref{iDRG0} for $i\neq j$ is verified using \eqref{iDRG1'} and \eqref{iDRG3'} in the similar way as \cite[\S 4.5]{LW20b}. Relations \eqref{iDRG34}- \eqref{iDRG6'} are obtained by applying $\TT_{\omega_i}^{-k} \TT_{\omega_j}^{-l}$ to finite type Serre relations \eqref{eq:S1}-\eqref{eq:S4} respectively. Hence, $\Phi_{red}$ is a well defined homomorphism.\par
The surjectivity and injectivity of $\Phi_{red}$ follows by similar arguments in \cite[proof of Theorem 3.13]{LW20b}. (For surjectivity, one need to replace all $ ^{\mathrm{Dr}}\tUi$ and $ ^{\mathrm{Dr}}\U$ in their arguments by $\tUiD_{red}$ and $ ^{\mathrm{Dr}}\U_{red} $ respectively, and follow similar arguments there.)
\end{proof}

Define generating functions
\begin{align}\label{eq:Genfun}
\begin{cases}
\B_{i}(z) =\sum_{k\in\Z} B_{i,k}z^{k},
\\
 \bTH_{i}(z)  =1+ \sum_{m > 0}(v_i-v_i^{-1})\Theta_{i,m}z^{m},
\\
\bH_i(u)=\sum_{m\geq 1} H_{i,m} u^m,
\\
\bDel(z)=\sum_{k\in\Z}  C^k z^k.
\end{cases}
\end{align}
Then \eqref{iDRG1'} can be written in terms of generating functions as
\begin{align}\label{H-TH2}
&(v_i-v_i^{-1})[\frac{\partial}{\partial z}\bH_i(z),\B_j(w)]\\\notag
= &\left(\frac{1}{1-v_i^{c_{ij}}zw^{-1}}-\frac{1}{1-v_i^{-c_{ij}}zw^{-1}}-\frac{1}{1-v_i^{c_{ij}} z w}+\frac{1}{1-v_i^{-c_{ij}}zw}\right)\B_j(w),
\end{align}
and \eqref{H-TH} can be written as
\begin{equation}
\bTH_i(z)=\exp((v_i-v_i^{-1}) \bH_i(z)).
\end{equation}
Conjugate \eqref{H-TH2} by $\bTH_i(z)$. Since $(v_i-v_i^{-1})\bTH_i(z)\frac{\partial}{\partial z}\bH_i(z)=\frac{\partial}{\partial z}\bTH_i(z) $, we have
\begin{align*}
&\frac{\partial}{\partial z}\big(\bTH_i(z) \B_j(w) \bTH_i^{-1}(z)\big) \\
= &\left(\frac{1}{1-v_i^{c_{ij}}zw^{-1}}-\frac{1}{1-v_i^{-c_{ij}}zw^{-1}}-\frac{1}{1-v_i^{c_{ij}} z wC}+\frac{1}{1-v_i^{-c_{ij}}zwC}\right)\bTH_i(z)\B_j(w)\bTH_i(z)^{-1}.
\end{align*}
By integrating it with respect to $z$, we obtain the following equivalent formulation of \eqref{H-TH2}
\begin{equation}\label{H-TH3}
\bTH_i(z) \B_j(w)=\left(\frac{1 -v_i^{-c_{ij}}zw^{-1}}{1 -v_i^{c_{ij}}zw^{-1}} \cdot \frac{1 -v_i^{c_{ij}} zw C}{1 -v_i^{-c_{ij}}zw C}\right)\B_j(w)\bTH_i(z).
\end{equation}
Hence, \eqref{H-TH2} is equivalent to \eqref{H-TH3}. Relation \eqref{H-TH3} can be written component-wisely as the following relation,
\begin{equation}\label{iDRG1''}
[\Theta_{i,k},B_{j,l}]+[\Theta_{i,k-2},B_{j,l}]C=v_i^{c_{ij}}[\Theta_{i,k-1},B_{j,l+1}]_{v_i^{-2c_{ij}}}+v_i^{-c_{ij}}[\Theta_{i,k-1},B_{j,l-1}]_{v_i^{2c_{ij}}}C.
\end{equation}
Thus, \eqref{iDRG1'} is equivalent to \eqref{iDRG1''}. See also \cite[Proposition 2.8]{LW20b} for the rank one case, and \cite[Proposition 3.12]{LW20b} for the general case.

\begin{corollary}
There exists a $\bQ(v)$-algebra antiautomorphism $\Psi: \tUiD_{red}  \to \tUiD_{red}$ given by
\begin{align*}
B_{i,k} &\mapsto B_{i,-k}, \qquad H_{i,l} \mapsto C^{-l}H_{i,l},\qquad \Theta_{i,r} \mapsto C^{-r} \Theta_{i,r},\\
C &\mapsto C^{-1}, \qquad \K_i \mapsto \K_i,
\end{align*}
for $k\in \Z, r,l>0,i\in \I_0$.
\end{corollary}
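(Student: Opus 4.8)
The plan is to build $\Psi$ first on the free $\bQ(v)$-algebra $\mathcal{F}$ on the symbols $\K_i^{\pm1},C^{\pm1},H_{i,m},B_{i,l}$ as an \emph{anti}homomorphism (equivalently a homomorphism $\mathcal{F}\to\mathcal{F}^{\mathrm{op}}$) determined by the stated values on generators, then to show that $\Psi$ preserves the ideal of defining relations \eqref{iDRG0}--\eqref{iDRG6'}, so that it descends to an antiendomorphism of $\tUiD_{red}$, and finally to note that $\Psi^2$ fixes every generator, so $\Psi$ is an involution and hence an antiautomorphism. The organizing principle is that $\Psi$ reverses the order of the $B$'s, negates the $\delta$-degree (the second index) of each generator, inverts $C$, and fixes $\K_i$; I expect each defining relation, read in $\mathcal{F}$, to be carried to a scalar multiple of the instance of the same relation with all indices negated.

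First I would pin down the induced action on the imaginary root vectors. Since the $H_{i,m}$ commute with one another and $C$ is central, $\Psi$ restricts to an algebra homomorphism on the commutative subalgebra they generate, acting on the series $\bH_i(z)=\sum_{m\ge1}H_{i,m}z^m$ by $\Psi(\bH_i(z))=\bH_i(C^{-1}z)$. Applying $\Psi$ to the functional equation \eqref{H-TH} then gives $\Psi(\bTH_i(z))=\bTH_i(C^{-1}z)$, whence $\Psi(\Theta_{i,r})=C^{-r}\Theta_{i,r}$, matching the formula in the statement; this simultaneously disposes of the relations in \eqref{iDRG0}.

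Next I would verify the remaining families one at a time. Relations \eqref{iDRG34} and \eqref{iDRG1'} follow from $\Psi([A,B])=-[\Psi(A),\Psi(B)]$ together with the centrality of $C$ and the exact cancellation of the $C$-powers, the image being the negated-index instance of the relation. Relation \eqref{iDRG2'} additionally requires exchanging the roles of $(i,k)$ and $(j,l)$: its image is the $i\leftrightarrow j$ transpose of \eqref{iDRG2'} with negated indices, which is legitimate precisely because $d_ic_{ij}=d_jc_{ji}$ forces $v_i^{c_{ij}}=v_j^{c_{ji}}$---the same symmetry used in the proof of Theorem \ref{DprADE}. For the equal-index Serre relations \eqref{iDRG4'}--\eqref{iDRG6'} I would reindex the sum by $s\mapsto(1-c_{ij})-s$ and use $\qbinom{1-c_{ij}}{s}_{v_i}=\qbinom{1-c_{ij}}{1-c_{ij}-s}_{v_i}$; the order-reversal then multiplies the left-hand side by $(-1)^{1-c_{ij}}$, and this sign is matched on the right-hand side because the latter is symmetric under reversal when $1-c_{ij}$ is even and contains the antisymmetric commutator $B_{i,k}B_{j,l}-B_{j,l}B_{i,k}$, contributing $-1$, when $1-c_{ij}$ is odd.

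The main technical burden---a matter of care rather than of new ideas---is relation \eqref{iDRG3'}, where the various $C$-powers multiply \emph{different} $\Theta$-terms and must balance exactly against the twist $\Theta_{i,r}\mapsto C^{-r}\Theta_{i,r}$. Rather than matching the four terms of the right-hand side by hand, I would run the check on the generating-function form \eqref{H-TH3}, or equivalently the component form \eqref{iDRG1''}, where the $C$-bookkeeping is transparent and invariance under simultaneous index-negation and $z\mapsto C^{-1}z$ is manifest. Once all families are verified, $\Psi$ descends to $\tUiD_{red}$; and since $B_{i,k}\mapsto B_{i,-k}\mapsto B_{i,k}$, $C\mapsto C^{-1}\mapsto C$, and $H_{i,l}\mapsto C^{-l}H_{i,l}\mapsto H_{i,l}$ using the centrality of $C$, we have $\Psi^2=\mathrm{id}$, so $\Psi$ is a bijective antiautomorphism as claimed.
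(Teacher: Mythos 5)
Your proposal is correct and follows essentially the same route as the paper, whose proof consists of noting $\Psi^2=1$ and asserting the straightforward verification of \eqref{iDRG1'}--\eqref{iDRG6'}; your elaboration (antihomomorphism on the free algebra, $\Psi(\bH_i(z))=\bH_i(C^{-1}z)$ forcing $\Psi(\Theta_{i,r})=C^{-r}\Theta_{i,r}$, reindexing $s\mapsto (1-c_{ij})-s$ with sign $(-1)^{1-c_{ij}}$ absorbed by the (anti)symmetry of the right-hand sides, and $\Psi^2=\mathrm{id}$) fills in exactly the checks the paper leaves implicit. One slip of citation, not of substance: \eqref{H-TH3} and \eqref{iDRG1''} are the generating-function and component forms of \eqref{iDRG1'}, not of \eqref{iDRG3'}; the relation \eqref{iDRG3'} corresponds to \eqref{iDRG3b}, where the same transparent check works via the paper's own formulas $\Psi(\B_i(w))=\B_i(w^{-1})$ and $\Psi\big(\bDel(wz)\bTH_i(z)\big)=\bDel\big((zw)^{-1}\big)\bTH_i(w^{-1})$, or componentwise via the identity $\Psi(\Theta_i(s,r))=\Theta_i(-r-1,-s-1)$ recorded after the corollary (also, your image of \eqref{iDRG2'} is in fact directly the $(i,j)$ instance at indices $(-k-1,-l-1)$, so no $i\leftrightarrow j$ transposition is needed, though invoking it is harmless since both ordered pairs are defining relations).
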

\begin{proof}
Note that $\Psi^2=1$. It is straightforward to check that $\Psi$ preserves defining relations \eqref{iDRG1'}-\eqref{iDRG6'} of $\tUiD_{red}$.
 Hence, $\Psi$ is a well-defined antiautomorphism of $\tUiD_{red}$. 
\end{proof}
By computing weights in the sense of Remark \ref{deg}, we can regard $\Psi$ as the reflection $\alpha+k\delta \mapsto \alpha-k\delta, \alpha\in \cR_0, k\in \Z$ on the affine root system $\cR$. The effect of $\Psi$ can be written in the generating function format by
\begin{align*}
\Psi(\B_i(w))=\B_i(w^{-1}),\qquad \Psi\big(\bDel(wz)\bTH_i(z)\big)=\bDel\big((zw)^{-1}\big)\bTH_i(w^{-1}).
\end{align*}
Let $\Theta_i(s,r)$ be a short notation for the RHS of \eqref{iDRG3'}, i.e.,
\begin{align}\label{eq:Th1}
\Theta_i(s,r):=(-\Theta_{i,s-r+1} C^r  +v_i^{-2}\Theta_{i,s-r-1} C^{r+1}  -\Theta_{i,r-s+1} C^s  +v_i^{-2}\Theta_{i,r-s-1} C^{s+1} )\K_i.
\end{align}
Note that $\Theta_i(s,r)=\Theta_i(r,s)$. By a direct computation, we have
\begin{align}
\Psi(\Theta_i(s,r))
&=\Theta_i(-r-1,-s-1).
\end{align}

\subsection{A Drinfeld type presentation for $\tUi$ of split affine BCF type}\label{BCF}
In this section, we add the general version of Serre relations to the current presentation given in Definition \ref{def:Dpr}; then we provide a Drinfeld type presentation in terms of generating functions in Theorem \ref{DprBCF} for $\tUi(\hg)$, where $\g$ is allowed to be any finite type except $G_2$.\par
Recall generating functions defined in \eqref{eq:Genfun}. Define $\bS(w_1,w_2,w_3|z;i,j)$ to be the following expression
\begin{align}
\Sym_{w_1,w_2,w_3}\left\{\sum_{r=0}^3(-1)^{3-r}\qbinom{3}{r}_{v_i}\B_{i}(w_1)\cdots\B_{i}(w_{r})\B_{j}(z)\B_{i}(w_{r+1})\cdots\B_{i}(w_3)\right\}\label{eq:BCF},
\end{align}
and similarly define $\bS(w_1,w_2|z;i,j)$ to be the following expression
\begin{align}
\Sym_{w_1,w_2}\left\{\sum_{r=0}^2(-1)^{ r}\qbinom{2}{r}_{v_i}\B_{i}(w_1)\cdots\B_{i}(w_{r})\B_{j}(z)\B_{i}(w_{r+1})\cdots\B_{i}(w_2)\right\}.\label{eq:ADE}
\end{align}
Denote
\begin{align*}
\phi_i(w_1,w_2,w_3)&=\frac{v_i^{-2 }w_2^{2}w_3^{-1}-w_2}{1+w_2 w_1^{-1}+w_1 w_3^{-1}+ w_2^2 w_3^{-2}+ w_2^2 w_1^{-1} w_3^{-1}+w_1 w_2 w_3^{-2}-([3]^2_{v_i}-3) w_2 w_3^{-1} },\\
\psi_i(w_1,w_2,w_3)&=\frac{-1-(1-v_i^{-2})w_2 w_3^{-1}+v_i^{-2}w_2^2 w_3^{-2}- w_1 w_3^{-1}+v_i^{-2}w_1 w_2 w_3^{-1} }{1+w_2 w_1^{-1}+w_1 w_3^{-1}+ w_2^2 w_3^{-2}+ w_2^2 w_1^{-2} w_3^{-1}+w_1 w_2 w_3^{-2}-([3]^2_{v_i}-3) w_2 w_3^{-1}}.
\end{align*}
Details for the proof of the following general version of Serre relations \eqref{iDRG5} and \eqref{iDRG6} are given in Section \ref{Serre}.
\begin{theorem}\label{DprBCF}
The universal affine $\imath$quantum group $\tUi$ is isomorphic to the $\bQ(v)$-algebra $ \tUiD $ which is defined by generators $\K_{i}^{\pm1}$, $C^{\pm1}$, $\Theta_{i,m},B_{i,k}$ $(i\in \I_0$, $m\geq 1$, $k\in\Z)$, subject to the following defining relations, for $i, j \in \I_0$:
\begin{align}
&\K_i,C \text{ are central, }\qquad  \bTH_i(z) \bTH_j(w) =\bTH_j(w) \bTH_i(z),
\label{iDRG1}
\\
& \B_j(w)  \bTH_i(z)
 = \left(
  \frac{1 -v_i^{c_{ij}}zw^{-1}}{1 -v_i^{-c_{ij}}zw^{-1}} \cdot \frac{1 -v_i^{-c_{ij}}zw C}{1 -v_i^{c_{ij}} zw C}
  \right)
\bTH_i(z) \B_j(w), \label{iDRG2}
\\
&(v_i^{c_{ij}}z -w) \B_i(z) \B_j(w) +(v_i^{c_{ij}}w-z) \B_j(w) \B_i(z)=0, \qquad \text{ if }i\neq j,\label{iDRG3a}
\\
&(v_i^2z-w) \B_i(z) \B_i(w) +(v_i^{2}w-z) \B_i(w) \B_i(z)\label{iDRG3b}
\\\notag
=&v_i^{-2} \frac{\bDel(zw)}{v_i-v_i^{-1}} \big( (v_i^2z-w)\bTH_i(w) +(v_i^2w-z)\bTH_i(z) \big)\K_{i},
\\
\label{iDRG4}&\B_i(w)\B_j(z)-\B_j(z)\B_i(w)=0, \qquad\text{ if }c_{ij}=0,
\\
\label{iDRG5}&\bS(w_1,w_2|z;i,j)
 \\\notag
=& -\Sym_{w_1,w_2} \frac{\bDel(w_1w_2)}{v_i-v_i^{-1}}\frac{[2]_{v_i} z w_1^{-1} }{1 -v_i^{2}w_2w_1^{-1}}[\bTH_i(w_2),\B_j(z)]_{v_i^{-2}}\K_i\\\notag
& -\Sym_{w_1,w_2} \frac{\bDel(w_1w_2)}{v_i-v_i^{-1}}\frac{1 +w_2w_1^{-1}}{1 -v_i^{2}w_2w_1^{-1}}[\B_j(z),\bTH_i(w_2)]_{v_i^{-2}}\K_i,
 \\
&\text{ if }c_{ij}=-1,
\notag
\\
& \label{iDRG6}\bS(w_1,w_2,w_3|z ;i,j) \\\notag
=& v_i[2]_{v_i}[3]_{v_i}z^{-1}\Sym_{w_1,w_2,w_3} \frac{\bDel(w_2w_3)}{v_i-v_i^{-1}}  \phi_i(w_1,w_2,w_3) \big[\B_i(w_1),[\B_j(z),\bTH_i(w_2)]_{v_i^{-4}}\big]\K_i\\\notag
&-[3]_{v_i}z^{-1}\Sym_{w_1,w_2,w_3} \frac{\bDel(w_2w_3)}{v_i-v_i^{-1}} \phi_i(w_1,w_2,w_3) \big[[ \B_j(z),\B_i(w_1)]_{v_i^{-2}},\bTH_i(w_2)\big]\K_i\\\notag
&-v_i[2]_{v_i}\Sym_{w_1,w_2,w_3} \frac{\bDel(w_2w_3)}{v_i-v_i^{-1}} \psi_i(w_1,w_2,w_3) \big[[\bTH_i(w_2),\B_j(z)]_{v_i^{-4}}, \B_i(w_1)\big]\K_i\\\notag
&+\Sym_{w_1,w_2,w_3} \frac{\bDel(w_2w_3)}{v_i-v_i^{-1}} \psi_i(w_1,w_2,w_3)\big[\bTH_i(w_2),[\B_i(w_1), \B_j(z)]_{v_i^{-2}}\big]\K_i,
\\\notag
& \text{ if } c_{ij}=-2.
\end{align}
where $ \phi_i(w_1,w_2,w_3) ,\psi_i(w_1,w_2,w_3)$ are defined above.
\end{theorem}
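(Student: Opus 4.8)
The plan is to deduce the theorem from the reduced presentation of Theorem~\ref{DprADE}, by showing that replacing the equal-index Serre relations by their general versions does not change the algebra. Since $\g$ is not of type $G_2$, every off-diagonal $c_{ij}$ lies in $\{0,-1,-2\}$, so \eqref{iDRG6'} never occurs and $\tUiD_{red}$ is presented by $\K_i^{\pm1},C^{\pm1},H_{i,m},B_{i,k}$ subject to \eqref{iDRG0}--\eqref{iDRG5'}. As noted around \eqref{H-TH2}--\eqref{H-TH3}, the generating-function relations \eqref{iDRG1}--\eqref{iDRG4} of $\tUiD$ are literal reformulations of \eqref{iDRG0}, of \eqref{iDRG1'} (equivalently \eqref{iDRG1''}), of \eqref{iDRG2'}, of \eqref{iDRG3'} and of \eqref{iDRG34}; and the functional equation \eqref{H-TH}, being invertible, identifies the subalgebra generated by the $H_{i,m}$ with the one generated by the $\Theta_{i,m}$. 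Thus the sole genuine difference between $\tUiD$ and $\tUiD_{red}$ lies in the Serre relations: $\tUiD$ imposes the general versions \eqref{iDRG5} and \eqref{iDRG6}, whereas $\tUiD_{red}$ imposes only their equal-index specializations \eqref{iDRG4'} and \eqref{iDRG5'}.

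First I would construct mutually inverse homomorphisms between $\tUiD$ and $\tUiD_{red}$, each fixing $B_{i,k},\K_i^{\pm1},C^{\pm1}$ and matching the $H$- and $\Theta$-generators through \eqref{H-TH}. For $\tUiD_{red}\to\tUiD$ it suffices to observe that \eqref{iDRG4'} and \eqref{iDRG5'} are special cases of \eqref{iDRG5} and \eqref{iDRG6}: extracting from $\bS(w_1,\dots,w_r|z;i,j)$ the component in which all the $\B_i$-variables carry a common power $w^k$ and $\B_j$ carries $z^l$ collapses the symmetrized left-hand side to $\sum_s(-1)^s\qbinom{r}{s}_{v_i}B_{i,k}^{r-s}B_{j,l}B_{i,k}^s$, while the rational kernels together with $\bDel$ and $\bTH_i$ on the right specialize to the stated $\K_iC^k$-terms; as \eqref{iDRG5},\eqref{iDRG6} are defining relations of $\tUiD$, these specializations hold there. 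For the reverse map $\tUiD\to\tUiD_{red}$ one must instead verify that the general Serre relations \eqref{iDRG5},\eqref{iDRG6} already hold in $\tUiD_{red}$; granting this (the main work, below), the map is well-defined, and since both maps fix the generators they are mutually inverse, so $\tUiD\cong\tUiD_{red}$. Composing this with $\Phi_{red}$ from Theorem~\ref{DprADE} then yields the isomorphism $\tUiD\to\tUi$ of the theorem, matching root vectors of the same name with $C\mapsto\K_\delta$.

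The heart of the argument, and the main obstacle, is therefore to derive \eqref{iDRG5} and \eqref{iDRG6} from the remaining relations of $\tUiD_{red}$. For $c_{ij}=-1$ I would use the generating-function method of \S\ref{SeADE}: starting from the equal-index relation \eqref{iDRG4'}, encode $\bS(w_1,w_2|z;i,j)$ as a generating function, apply the straightening relation \eqref{iDRG3a} to commute the two $\B_i$-factors past the spectral parameters, and use the exchange relation \eqref{iDRG2} together with \eqref{iDRG3b} to turn the surviving $\B_i\B_i$-pairs into $\bTH_i$-contributions; collecting the rational prefactors and symmetrizing in $w_1,w_2$ should reproduce the two $\K_i$-terms on the right-hand side of \eqref{iDRG5}. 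For $c_{ij}=-2$ the same scheme applies to $\bS(w_1,w_2,w_3|z;i,j)$, but the bookkeeping is much heavier: the left-hand side is symmetric of degree three in the $w$'s, the straightening and exchange steps produce the rational kernels $\phi_i$ and $\psi_i$, and pinning these down along with the nested $q$-commutators requires the symmetric-function identities of \S\ref{symfun} and the explicit generating-function computation of \S\ref{genfor}. I expect this $c_{ij}=-2$ case to be the principal difficulty, both in guessing the correct closed forms $\phi_i,\psi_i$ and in checking that the symmetrization over $w_1,w_2,w_3$ annihilates all spurious terms, leaving exactly the finite-sum right-hand side of \eqref{iDRG6} whose constant term recovers the finite-type Serre relation \eqref{eq:S3}.
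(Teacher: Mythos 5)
Your proposal is correct and follows essentially the same route as the paper: the paper likewise reduces Theorem~\ref{DprBCF} to an isomorphism $\tUiD_{red}\cong\tUiD$ via Theorem~\ref{DprADE}, observes that \eqref{iDRG4'} and \eqref{iDRG5'} are the $w_1^kw_2^kz^l$ (resp.\ $w_1^kw_2^kw_3^kz^l$) components of \eqref{iDRG5} and \eqref{iDRG6} so the map $\tUiD_{red}\to\tUiD$ is well defined, and defers the converse derivation of the general Serre relations inside $\tUiD_{red}$ to Section~\ref{Serre}, exactly as you do. The only (minor) discrepancy is in your sketch of that derivation: the paper does not manipulate the equal-index relation \eqref{iDRG4'} directly, but instead establishes the symmetric index-shift identities (Lemmas~\ref{lem:SSS}--\ref{lem:SS} for $c_{ij}=-1$, Proposition~\ref{symS} and its $\Psi$-image for $c_{ij}=-2$) from \eqref{iDRG2'}--\eqref{iDRG3'}, rewrites them as the generating-function equations \eqref{gen7}--\eqref{gen6} and \eqref{gen1}--\eqref{gen2}, and then takes two linear combinations to isolate $\bS$ and divide by its coefficient, producing $\phi_i,\psi_i$ --- consistent with, though organized differently from, the straightening-and-exchange mechanism you describe.
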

\begin{proof}
By Theorem \ref{DprADE}, it suffices to show that $\tUiD_{red}$ is isomorphic to $\tUiD$. The componentwise version of relations \eqref{iDRG3a}-\eqref{iDRG4} are the same as relations \eqref{iDRG2'}-\eqref{iDRG34}, and the componentwise version of the relation \eqref{iDRG2} is the relation \eqref{iDRG1''}. One can find a proof for this in \cite[Theorem 5.1]{LW20b}. By a direct computation, \eqref{iDRG4'} is the $w_1^k w_2^k z^l$ component of \eqref{iDRG5}, and \eqref{iDRG5'} is the $w_1^k w_2^k w_3^k z^l$ component of \eqref{iDRG6}. Hence, the map $\Phi_{red}:\tUiD_{red}\longrightarrow \tUiD$ by sending generators $\K_{i}^{\pm1}$, $C^{\pm1}$, $\Theta_{i,m},B_{i,k}$ to those with same names is a well-defined homomorphism.
\par
We will show in Section \ref{Serre} that relations \eqref{iDRG5} and
\eqref{iDRG6} can be derived from defining relations of $ \tUiD_{red}$. Thus, the inverse of $\Phi_{red}$ constructed in the obvious way is well-defined, which implies $\Phi_{red}$ is an isomorphism.
\end{proof}
\begin{remark}
As pointed out in the proof, when $\g$ is of ADE type, this presentation is identical to the one in \cite[\S 3.4]{LW20b} and thus Theorem \ref{DprBCF} can be viewed as a generalization of their work.
\end{remark}
\begin{remark}
As originally formulated in \cite[(3.32)(3.38)(5.6)]{LW20b}, \eqref{iDRG5} can be written component-wisely as
\begin{equation}
\bS(k_1,k_2|l;i,j)=\R(k_1,k_2|l; i,j),
\end{equation}
where
\begin{align}
\label{eq:Skk}\bS(k_1,k_2|l;i,j)
&=\Sym_{k_1,k_2} \big( B_{i,k_1} B_{i,k_2} B_{j,l} -[2]_{v_i} B_{i,k_1} B_{j,l} B_{i,k_2} + B_{j,l} B_{i,k_1} B_{i,k_2}\big),
\\
 \label{eq:Rkk}\R(k_1,k_2|l; i,j)
&=\Sym_{k_1,k_2}\K_i  C^{k_1}
\Big(-\sum_{p\geq 0} v_i^{2p}  [2]_{v_i} [\Theta _{i,k_2-k_1-2p-1},B_{j,l-1}]_{v_i^{-2}}C^{p+1}
\\\notag
&\qquad\qquad -\sum_{p\geq 1} v_i^{2p-1}  [2]_{v_i} [B_{j,l},\Theta _{i,k_2-k_1-2p}]_{v_i^{-2}} C^{p}
 - [B_{j,l}, \Theta _{i,k_2-k_1}]_{v_i^{-2}} \Big).
\end{align}
\end{remark}

\begin{remark}\label{rmk:com}
One can obtain the componentwise formulas of \eqref{iDRG6} by expanding the denominators of $\phi_i(w_1,w_2,w_3)$ and $\psi_i(w_1,w_2,w_3)$. Note that, after rewriting $w_3^{-1}$ as $ w_2 C$ using $\Delta(w_2 w_3)$, denominators of $\phi_i(w_1,w_2,w_3),\psi_i(w_1,w_2,w_3)$ have the form $1+A$ such that $w_3$ and nonpositive powers of $w_2$ do not appear in $A$. Hence, once we expand the denominators, each component of the RHS will be a finite sum.
\par The constant component of \eqref{iDRG6} is the same as \eqref{eq:S3}.
The general componentwise formula of \eqref{iDRG6} is, however, too complicated to write down.
\end{remark}

\begin{remark}
Relations \eqref{iDRG1}-\eqref{iDRG4} are homogeneous by a direct observation on their componentwise formulas. Relations \eqref{iDRG5} and \eqref{iDRG6} are also homogeneous, since they can be derived from relations \eqref{iDRG1}-\eqref{iDRG4} in Section \ref{Serre}.
\end{remark}

\begin{remark}\label{rmk:spe}
Recall the filtration and $\tU^{\imath,m}$ in Remark \ref{deg}. For any $\beta=\sum_{i\in \I} n_i \alpha_i\in \cR^+$, define its height to be
\[
\he^+(\beta)=\sum_{i\in \I} n_i.
\]
Let $d=\he(\delta)$. By similar arguments in \cite[Proposition 4.4]{BK20},
\[
B_{i,k}\in \tU^{\imath, 1+k d}\setminus \tU^{\imath, k d},\quad \TH_{i,l} \in \tU^{\imath,ld}\setminus \tU^{\imath, ld-1},\quad H_{i,l} \in \tU^{\imath,ld}\setminus \tU^{\imath, ld-1} ,
\]
and the images of $B_{i,k},\TH_{i,l},H_{i,l}$ in $\mathrm{gr}\Ui$ are, up to a $\bQ(v)[\K_i^{\pm 1}]$ multiple, Drinfeld generators $x_{i,-k}^-,\varphi_{i,-l},h_{i,-l}$ of $ \U^- $ respectively for $k\geq 0,l > 0$. Since $B_{i,-k}=-\frac{1}{[2]_{v_i}}[H_{i,k},B_i]C^{-1}+B_{i,k}C^{-1}$ for $k>0$, we have
\[
B_{i,-k}\in  \tU^{\imath, 1+kd}.
\]\par
We claim the $w_1^{k_1} w_2^{k_2} w_3^{k_3} z^l$ component of \eqref{iDRG6} for $k_1, k_2, k_3, l \geq 0$ reduces to the Serre relation \eqref{Dr8} in $\mathrm{gr}\tUi\cong \U^-\otimes \bQ(v)[\K_i^\pm|i\in \I]$. Observe that this component has the form
\begin{align}
\label{iDRG6''}&\Sym_{k_1,k_2,k_3}\sum_{t=0}^{3} (-1)^t \qbinom{3}{t}_{v_i}
B_{i,k_1}\cdots
 B_{i,k_t} B_{j,l}  B_{i,k_t+1} \cdots B_{i,k_n}\\\notag
=
\Sym_{k_1,k_2,k_3}\bigg(&\sum (*)\TH_{i,k_2-s}B_{j,l'}B_{i,k_{1}+t}+\sum (*)\TH_{i,k_2-s}B_{i,k_{1}+t}B_{j,l'}+\sum (*)B_{j,l'}\TH_{i,k_2-s}B_{i,k_{1}+t}\\\notag
+&\sum (*)B_{i,k_{1}+t}\TH_{i,k_2-s}B_{j,l'}+\sum (*)B_{j,l'}B_{i,k_{1}+t}\TH_{i,k_2-s}+\sum (*)B_{i,k_{1}+t}B_{j,l'}\TH_{i,k_2 -s}\bigg).
\end{align}
where coefficients $(*)$ lie in $\bQ(v)[\K_i^\pm|i\in \I]$ and each sum ranges in $0\leq s \leq k_2, -s\leq t \leq s,l'\in\{l,l+1\}$. By a direct computation of heights, the RHS lies in $\tU^{\imath,3+(k_1+k_2+k_3+l)d}$, while the LHS lies in $\tU^{\imath, 4+(k_1+k_2+k_3+l)d}\setminus\tU^{\imath, 3+(k_1+k_2+k_3+l)d}$. Hence, the RHS of \eqref{iDRG6''} disappears in $\mathrm{gr}\tUi$, and thus the componentwise version of \eqref{iDRG6} reduces to the Serre relation \eqref{Dr8} in the original Drinfeld presentation.
\end{remark}

\section{Verification of the relation \eqref{iDRG1'}}\label{verf}

 In this section, we establish the relation \eqref{iDRG1'} for $i\neq j$ in $\Ui$ and complete the proof of Theorem \ref{DprADE}. \par
Recall that \eqref{iDRG1'} is equivalent to \eqref{iDRG1''}. Hence, it suffices to show that \eqref{iDRG1''} for $i\neq j$ holds in $\Ui$. Fix $i\neq j\in \I_0$ and denote
\begin{equation}\label{Yk}
Y_{k,l}=[\Theta_{i,k},B_{j,l}]+[\Theta_{i,k-2},B_{j,l}]C-v_i^{c_{ij}}[\Theta_{i,k-1},B_{j,l+1}]_{v_i^{-2c_{ij}}}-v_i^{-c_{ij}}[\Theta_{i,k-1},B_{j,l-1}]_{v_i^{2c_{ij}}}C.
\end{equation}
Since $\TH_{i,0}=\frac{1}{v_i-v_i^{-1}}$ and $\TH_{i,k}=0,\forall k<0$ by our convention, $Y_{k,l}=0$ if $k\leq 0$ and the relation \eqref{iDRG1''} is equivalent to $Y_{k,l}=0$.
\par
We will show that $Y_{k,l}=0$ for $k>0,l\in \Z$ in $\tUi$ in this section, in order to verify the relation \eqref{iDRG1''}. Other two defining relations \eqref{iDRG2'} and \eqref{iDRG3'} of $\tUiD_{red}$ are allowed to be used in this section, since their proof does not need \eqref{iDRG1'}.
\par

Recall some basic properties for $q$-brackets, which shall be used heavily in various computations in this section as well as remaining sections.

\begin{lemma}[\text{\cite[Remark 4.17]{Da12}, also \cite[Introduction]{Ji98}}]\label{lem:jac}Let $a,b,c\in \tUi,u,v,w\in \bC(q)\setminus\{0\}$. We have
\begin{enumerate}
\item $[a,b]_u=-u [b,a]_{u^{-1}}$,
\item $\big[[a,b]_u,b\big]_v=\big[[a,b]_v,b\big]_u$,
\item $\big[[a,b]_u,c\big]_v=\big[a,[b,c]_{v/w}\big]_{uw}-u\big[b,[a,c]_w\big]_{v/uw}.$
\end{enumerate}
\end{lemma}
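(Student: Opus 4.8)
The final statement to prove is Lemma~\ref{lem:jac}, the three basic identities for $q$-brackets. These are purely formal identities in an associative algebra, so the plan is to verify each one by direct expansion of the bracket notation $[a,b]_u = ab - uba$, treating $u,v,w$ as invertible scalars. No structural theory of $\tUi$ is needed; the only ambient facts used are associativity and that scalars commute with everything.

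For part (1), I would expand $[a,b]_u = ab - uba$ and $[b,a]_{u^{-1}} = ba - u^{-1}ab$, then observe $-u[b,a]_{u^{-1}} = -u\,ba + ab = ab - u\,ba = [a,b]_u$. For part (2), the cleanest route is to note both sides are symmetric expressions in the two scalars: expanding $\big[[a,b]_u,b\big]_v = (ab-uba)b - vb(ab-uba) = ab^2 - u\,bab - v\,bab + uv\,b^2a = ab^2 - (u+v)bab + uv\,b^2a$, which is manifestly symmetric in $u \leftrightarrow v$, and the right-hand side $\big[[a,b]_v,b\big]_u$ yields the same expression with $u,v$ swapped, hence equal. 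For part (3), the Jacobi-type identity, I would expand all three terms of the claimed equality. The left-hand side is $\big[[a,b]_u,c\big]_v = (ab-uba)c - vc(ab-uba) = abc - u\,bac - v\,cab + uv\,cba$. On the right, I would expand $\big[a,[b,c]_{v/w}\big]_{uw} = a(bc - (v/w)cb) - uw(bc-(v/w)cb)a = abc - (v/w)acb - uw\,bca + uv\,cba$ and similarly $-u\big[b,[a,c]_w\big]_{v/uw} = -u\big(b(ac-w\,ca) - (v/uw)(ac-w\,ca)b\big) = -u\,bac + uw\,bca + (v/w)acb - v\,cab$; adding these two, the terms $-(v/w)acb$ and $+(v/w)acb$ cancel, as do $-uw\,bca$ and $+uw\,bca$, leaving $abc - u\,bac - v\,cab + uv\,cba$, which matches the left-hand side.

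Since the identities are stated over $\tUi$ (more precisely with scalars in $\bC(q)\setminus\{0\}$, though the same holds in any unital associative algebra), there is no obstacle of substance: the verification is a routine bookkeeping of eight or fewer monomials in $a,b,c$ per side. The only point demanding minor care is tracking the scalar coefficients $u,v,w$ and their ratios in part (3) so that the cross terms cancel correctly; I would organize the computation by collecting coefficients of each of the monomials $abc, bac, cab, cba, acb, bca$ and checking agreement termwise. I expect this to be the most error-prone step simply because of the number of scalar factors, but it presents no conceptual difficulty, and the result is classical (as the citations to \cite[Remark 4.17]{Da12} and \cite[Introduction]{Ji98} indicate).
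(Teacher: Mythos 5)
Your verification is correct in every detail: all scalar coefficients in parts (1)--(3) check out, including the cancellations of the $acb$ and $bca$ cross terms in the Jacobi-type identity. The paper itself offers no proof of this lemma, citing \cite[Remark 4.17]{Da12} and \cite[Introduction]{Ji98} instead, and your direct expansion is precisely the routine computation those references carry out, so there is nothing to compare or correct.
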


\subsection{An induction on $k$}\label{Verf1}
Since the index $l$ of $Y_{k,l}$ can be shifted using $\TT_{\omega_j}$, it suffices to focus on $k$. We first establish an inductive formula on $k$, which relates $Y_{k,l}$ and $Y_{k+2,l}.$ Such an induction is partially inspired by Damiani's reduction for the relation \eqref{Dr5} affine quantum group \cite[Proposition 7.15]{Da12}.
\begin{proposition}\label{prop:recur}
Let $k>1$ or $k=0$. We have for $l\in \Z$,
\begin{equation}\label{recur}
Y_{k+2,l}=v_i^{-2}Y_{k,l}C.
\end{equation}
We also have $Y_{3,l}=(1-v_i^{-2})Y_{1,l}C$ for $l\in \Z$.
\end{proposition}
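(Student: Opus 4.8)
The goal is to prove the recursion $Y_{k+2,l}=v_i^{-2}Y_{k,l}C$ for $k>1$ or $k=0$, together with the boundary identity $Y_{3,l}=(1-v_i^{-2})Y_{1,l}C$. Since $Y_{k,l}$ is built out of $q$-brackets of the imaginary root vectors $\Theta_{i,k}$ against $B_{j,l}$, and the $\Theta_{i,m}$ are tied to the real root vectors $B_{i,k}$ through the functional equation \eqref{H-TH} and through relation \eqref{iDRG3'}, the strategy I would adopt is to trade each $\Theta_{i,m}$ in $Y_{k,l}$ for a $q$-commutator of $B_{i,\bullet}$'s using \eqref{iDRG3'}, and then exploit relations \eqref{iDRG2'} and \eqref{iDRG3'} — the only two relations permitted at this stage — to collapse the resulting expression. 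Concretely, I would write $\Theta_{i,m}$ (for $m\ge 1$) as a combination of $[B_{i,a},B_{i,b}]_{v_i^{\pm 2}}$-type terms coming from \eqref{iDRG3'}; substituting these into the definition \eqref{Yk} of $Y_{k,l}$ produces an expression involving triple brackets of the form $\big[[B_{i,a},B_{i,b}]_{u},B_{j,l'}\big]_{v}$.

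Next I would apply the Jacobi-type identity Lemma~\ref{lem:jac}(3) to each triple bracket, rewriting $\big[[B_{i,a},B_{i,b}]_u,B_{j,l'}\big]_v$ as a combination of $\big[B_{i,a},[B_{i,b},B_{j,l'}]_{\ast}\big]_{\ast}$ and $\big[B_{i,b},[B_{i,a},B_{j,l'}]_{\ast}\big]_{\ast}$. The inner brackets $[B_{i,b},B_{j,l'}]_{\ast}$ are exactly the objects controlled by relation \eqref{iDRG2'}, which lets me shift an index on $B_{i,\bullet}$ against one on $B_{j,\bullet}$ at the cost of the prescribed $v_i$-powers. The heart of the computation is that, after these substitutions, the index-shift structure of \eqref{iDRG2'} should let me re-express everything in $Y_{k+2,l}$ in terms of the same bracket data appearing in $Y_{k,l}$, with the extra powers of $v_i$ and the central element $C$ bookkeeping into precisely the factor $v_i^{-2}C$. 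The shift in the lower index of $\Theta$ by $2$ (from $k$ to $k+2$) on the left matches the appearance of $C$ and the $v_i^{-2}$ from the structure of \eqref{iDRG3'} and from the $C$-weighted terms in \eqref{Yk}.

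For the boundary case $Y_{3,l}=(1-v_i^{-2})Y_{1,l}C$, the same machinery applies but with a correction: the recursion \eqref{recur} is stated to hold only for $k>1$ or $k=0$, and the case $k=1$ degenerates because $\Theta_{i,0}=(v_i-v_i^{-1})^{-1}$ is a \emph{scalar} (and $\Theta_{i,m}=0$ for $m<0$), so some of the bracket terms that cancel in the generic step instead vanish outright or contribute a scalar. I would isolate exactly these degenerate contributions; the surviving scalar term is what produces the modified coefficient $(1-v_i^{-2})$ rather than $v_i^{-2}$. I expect the main obstacle to be the careful bookkeeping of the $v_i$-powers and the $C$-placements through the double application of Lemma~\ref{lem:jac} and \eqref{iDRG2'}: the algebra is elementary but the number of terms is large, and one must keep track of which $\Theta_{i,m}$ terms in \eqref{Yk} carry a factor of $C$ so that the central elements line up on both sides. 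A clean way to manage this is to work throughout with the $q$-bracket identities of Lemma~\ref{lem:jac} symbolically, reducing everything to a common normal form of nested brackets before comparing $Y_{k+2,l}$ with $v_i^{-2}Y_{k,l}C$ term by term.
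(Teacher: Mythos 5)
Your plan follows the paper's proof essentially verbatim: the paper likewise converts the imaginary root vectors into $q$-brackets of real root vectors via \eqref{iDRG3'}, shuffles indices with Lemma \ref{lem:jac}(3) and relation \eqref{iDRG2'}, and cancels everything by direct computation, with the $k=1$ degeneration traced to $\Theta_{i,0}=(v_i-v_i^{-1})^{-1}$ and $\Theta_{i,m}=0$ for $m<0$ exactly as you predict. The one refinement worth noting is that the paper computes the difference $Y_{k+2,l}\K_i-v_i^{-2}Y_{k,l}C\K_i$ directly, grouping the combinations $\Theta_{i,m+2}-v_i^{-2}\Theta_{i,m}C$, each of which \eqref{iDRG3'} expresses in a single step as a sum of exactly two $q$-brackets; this sidesteps the telescoping (and the extra $B_{i,p}B_{i,m-p-2}$-type sums, cf.\ \eqref{Thim1}) that your literal proposal of trading each individual $\Theta_{i,m}$ for brackets would entail.
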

\begin{proof}
Write $Y_{k+2,l}\K_i-v_i^{-2}Y_{k,l}C\K_i=\Sigma_1+\Sigma_2+\Sigma_3+\Sigma_4$ for $k>1$ or $k=0$ where each summand $\Sigma_i$ is defined and rewritten as follows:
\begin{align}\label{verf1}
\Sigma_1:=&\big[\Theta_{i,k+2}-v_i^{-2}\Theta_{i,k}C , B_{j,l}\big]\K_i\\\notag
=&-\big[[B_{i,k+2},B_i]_{v_i^2} , B_{j,l}\big]-\big[[B_{i,1},B_{i,k+1}]_{v_i^2} , B_{j,l}\big],\\
\label{verf2}\Sigma_2:=&[\Theta_{i,k}-v_i^{-2}\Theta_{i,k-2}C , ,B_{j,l}] \K_i C\\\notag
=& -\big[[B_{i,k+1},B_{i,1}]_{v_i^2} , B_{j,l}\big]-\big[[B_{i,2},B_{i,k}]_{v_i^2} , B_{j,l}\big],
\\
\label{verf3}v_i^{-c_{ij}}\Sigma_3:=&-[\Theta_{i,k+1}-v_i^{-2}\Theta_{i,k-1}C,B_{j,l+1}]_{v_i^{-2c_{ij}}}\K_i\\\notag
=& \big[[B_{i,k+1},B_i]_{v_i^2} , B_{j,l+1}\big]_{v_i^{-2c_{ij}}}+\big[[B_{i,1},B_{i,k}]_{v_i^2} , B_{j,l+1}\big]_{v_i^{-2c_{ij}}}\\\notag
=& \big[B_{i,k+1},[B_i , B_{j,l+1}]_{v_i^{-c_{ij}}}\big]_{v_i^{2-c_{ij}}}-v_i^2 \big[B_{i},[B_{i,k+1} , B_{j,l+1}]_{v_i^{-c_{ij}}}\big]_{v_i^{-2-c_{ij}}}\\\notag
 & +\big[B_{i,1},[B_{i,k} , B_{j,l+1}]_{v_i^{-c_{ij}}}\big]_{v_i^{2-c_{ij}}}-v_i^2 \big[B_{i,k},[B_{i,1} , B_{j,l+1}]_{v_i^{-c_{ij}}}\big]_{v_i^{-2-c_{ij}}}\\\notag
=& - \big[B_{i,k+1},[ B_{j,l} ,B_{i,1}]_{v_i^{-c_{ij}}}\big]_{v_i^{2-c_{ij}}}+v_i^2 \big[B_{i},[B_{j,l} ,B_{i,k+2}]_{v_i^{-c_{ij}}}\big]_{v_i^{-2-c_{ij}}}\\\notag
 & -\big[B_{i,1},[ B_{j,l},B_{i,k+1}]_{v_i^{-c_{ij}}}\big]_{v_i^{2-c_{ij}}}+v_i^2 \big[B_{i,k},[ B_{j,l},B_{i,2}]_{v_i^{-c_{ij}}}\big]_{v_i^{-2-c_{ij}}},
\\
 \label{verf4}v_i^{c_{ij}}\Sigma_4:=&-[\Theta_{i,k+1}-v_i^{-2}\Theta_{i,k-1}C,B_{j,l-1}]_{v_i^{2c_{ij}}}C\K_i \\\notag
=& \big[[B_{i,k+2},B_{i,1}]_{v_i^2} , B_{j,l-1}\big]_{v_i^{2c_{ij}}}+\big[[B_{i,2},B_{i,k+1}]_{v_i^2} , B_{j,l-1}\big]_{v_i^{2c_{ij}}}\\\notag
=& \big[B_{i,k+2},[B_{i,1} , B_{j,l-1}]_{v_i^{c_{ij}}}\big]_{v_i^{2+c_{ij}}}-v_i^2 \big[B_{i,1},[B_{i,k+2} , B_{j,l-1}]_{v_i^{c_{ij}}}\big]_{v_i^{-2+c_{ij}}}\\\notag
 & +\big[B_{i,2},[B_{i,k+1} , B_{j,l-1}]_{v_i^{c_{ij}}}\big]_{v_i^{2+c_{ij}}}-v_i^2 \big[B_{i,k+1},[B_{i,2} , B_{j,l-1}]_{v_i^{c_{ij}}}\big]_{v_i^{-2+c_{ij}}}\\\notag
=& -\big[B_{i,k+2},[B_{j,l} ,B_{i}]_{v_i^{c_{ij}}}\big]_{v_i^{2+c_{ij}}}+v_i^2 \big[B_{i,1},[ B_{j,l}, B_{i,k+1}]_{v_i^{c_{ij}}}\big]_{v_i^{-2+c_{ij}}}\\\notag
 & -\big[B_{i,2},[B_{j,l} , B_{i,k+1}]_{v_i^{c_{ij}}}\big]_{v_i^{2+c_{ij}}}+v_i^2 \big[B_{i,k+1},[B_{j,l} , B_{i,1}]_{v_i^{c_{ij}}}\big]_{v_i^{-2+c_{ij}}},
\end{align}
where relation \eqref{iDRG3'} is used in the first equality in each of \eqref{verf1}-\eqref{verf4}, and relation \eqref{iDRG2'} is used in the last equality of \eqref{verf3}-\eqref{verf4}. Now, adding \eqref{verf1}-\eqref{verf4} together, we have
\begin{align*}
&Y_{k+2,l}\K_i-v_i^{-2}Y_{k,l}C\K_i=\Sigma_1+\Sigma_2+\Sigma_3+\Sigma_4\\
=&-\big[[B_{i,k+2},B_i]_{v_i^2} , B_{j,l}\big]-\big[[B_{i,1},B_{i,k+1}]_{v_i^2} , B_{j,l}\big]\\
 &-\big[[B_{i,k+1},B_{i,1}]_{v_i^2} , B_{j,l}\big]-\big[[B_{i,2},B_{i,k}]_{v_i^2} , B_{j,l}\big]\\
 &-v_i^{c_{ij}}\big[B_{i,k+1},[ B_{j,l} ,B_{i,1}]_{v_i^{-c_{ij}}}\big]_{v_i^{2-c_{ij}}}+v_i^{2+c_{ij}} \big[B_{i},[B_{j,l} ,B_{i,k+2}]_{v_i^{-c_{ij}}}\big]_{v_i^{-2-c_{ij}}}\\
 &-v_i^{c_{ij}}\big[B_{i,1},[ B_{j,l},B_{i,k+1}]_{v_i^{-c_{ij}}}\big]_{v_i^{2-c_{ij}}}+v_i^{2+c_{ij}} \big[B_{i,k},[ B_{j,l},B_{i,2}]_{v_i^{-c_{ij}}}\big]_{v_i^{-2-c_{ij}}}\\
 &-v_i^{-c_{ij}}\big[B_{i,k+2},[B_{j,l} ,B_{i}]_{v_i^{c_{ij}}}\big]_{v_i^{2+c_{ij}}}+v_i^{2-c_{ij}} \big[B_{i,1},[ B_{j,l}, B_{i,k+1}]_{v_i^{c_{ij}}}\big]_{v_i^{-2+c_{ij}}}\\
 &-v_i^{-c_{ij}}\big[B_{i,2},[B_{j,l} , B_{i,k+1}]_{v_i^{c_{ij}}}\big]_{v_i^{2+c_{ij}}}+v_i^{2-c_{ij}} \big[B_{i,k+1},[B_{j,l} , B_{i,1}]_{v_i^{c_{ij}}}\big]_{v_i^{-2+c_{ij}}}\\
=&0,
\end{align*}
where the last step follows by a direct computation using Lemma \ref{lem:jac}. Hence, $Y_{k+2,l}=v_i^{-2}Y_{k,l}C$ for $k>1$ or $k=0$. For $k=1$, using a similar method, we have $Y_{3,l}=(1-v_i^{-2})Y_{1,l}C$.
\end{proof}

\subsection{Base cases}\label{Verf2}
By Proposition \ref{prop:recur}, $Y_{2m,l}$ is a scalar multiple of $Y_{0,l}$ and $Y_{2m-1,l}$ is a scalar multiple of $Y_{1,l}$ for $m>0,l\in \Z$. Since $Y_{0,l}=0$ as discussed in the beginning of Section \ref{verf}, it remains to show that $Y_{1,l}=0$ for $l\in \Z$.
\par
We explain the underlying idea for proving the base case $Y_{1,l}=0$, since details in the proof are quite technical. By the definition \eqref{Yk}, we have
\begin{equation}\label{Y1}
Y_{1,l}=[\TH_{i,1},B_{j,l}]-[c_{ij}]_{v_i} B_{j,l+1} + [c_{ij}]_{v_i} B_{j,l-1}C.
\end{equation}
Since $\TH_{i,1}=-[B_{i,1},B_i]_{v_i^2}$ by \eqref{iDRG3'}, we replace $\TH_{i,1}$ in \eqref{Y1} by the $q$-brackets of real root vectors and we obtain
\begin{equation}\label{Y1'}
Y_{1,l}=-\big[[B_{i,1},B_i]_{v_i^2},B_{j,l}\big]-[c_{ij}]_{v_i} B_{j,l+1} + [c_{ij}]_{v_i} B_{j,l-1}C.
\end{equation}
We prove that the RHS of \eqref{Y1'} equal $0$ in separate cases depending on $c_{ij},i,j\in\I_0$. For $c_{ij}=-1,$ we use the finite type Serre relation \eqref{eq:S1}. For $c_{ij}=-2$, we use the formulas of $\TT_i,\TT_i^{-1}$ in Lemma \ref{lem:Ti}. For $c_{ij}=-3$, we use both of the finite type Serre relation \eqref{eq:S3} and the formulas of $\TT_i,\TT_i^{-1}$. 
\par
We also recall that, by Lemma \ref{lem:bra}(b) and the construction of real root vectors, $\TT_{\omega_j}$ fixes $B_{i,k}$ for any $j\neq i,k\in \Z$ while $\TT_{\omega_i}$ sends $B_{i,k}$ to $o(i)B_{i,k-1}$.
\par
We now start to prove $Y_{1,l}=0$ case by case.\par
(1)$c_{ij}=c_{ji}=0$. In this case, since both $B_{i}, B_{i,1}$ commute with $B_{j,l}$, $\TH_{i,1}$ commutes with $B_{j,l}$ for $l\in \Z$. Hence, $Y_{1,l}=0$. \par
(2)$c_{ij}=c_{ji}=-1$.
We rewrite the finite type Serre relation \eqref{eq:S2} in terms of $q$-brackets as
\begin{equation}
 \big[B_i,[B_i, B_j]_{v_i} \big]_{v_i^{-1}}=-v_i^{-1} B_{j} \K_i,\qquad
 \big[[ B_j, B_i]_{v_i},B_i \big]_{v_i^{-1}}=-v_i^{-1} B_{j} \K_i.
\end{equation}
i.e. each of these two relations is equivalent to $\eqref{eq:S2}$.\par
Applying $o(j)^l\TT_{\omega_j}^{-l}\TT_{\omega_i}^{-k}$ to them, for $k,l\in \Z$, we have
 \begin{equation}\label{Se:brkt}
 \big[B_{i,k},[B_{i,k}, B_{j,l}]_{v_i} \big]_{v_i^{-1}}=-v_i^{-1} B_{j,l} \K_i C^k,\qquad
 \big[[ B_{j,l}, B_{i,k}]_{v_i},B_{i,k} \big]_{v_i^{-1}}=-v_i^{-1} B_{j,l} \K_i C^k.
\end{equation}
We now compute
\begin{align*}
[\TH_{i,1},B_j]\K_i&\overset{\eqref{iDRG3'}}{=}-\big[[B_{i,1},B_i]_{v_i^2},B_j\big]\\
&\overset{\qquad}{=}-\big[B_{i,1},[B_i,B_j]_{v_i }\big]_{v_i}+v_i^2 \big[B_i,[B_{i,1},B_j]_{v_i^{-1}}\big]_{v_i^{-1}}\\
&\overset{\eqref{iDRG2'}}{=} \big[B_{i,1},[ B_{j,-1},B_{i,1}]_{v_i }\big]_{v_i}-v_i^2 \big[B_i,[B_{j,1},B_i]_{v_i^{-1}}\big]_{v_i^{-1}}\\
&\overset{\eqref{Se:brkt}}{=} B_{j,-1} \K_iC -B_{j,1}\K_i.
\end{align*}
Hence, $Y_{1,0}=0$, and by applying $\TT_{\omega_j}^{-l}$, we get $Y_{1,l}=0$.\par
(3)$c_{ij}=-2,c_{ji}=-1$.
We first write $\TT_i(B_j)$ defined in Lemma \ref{lem:Ti} in terms of $q$-brackets as
\begin{align}
[2]_{v_i}\TT_i(B_j) = \big[ [B_j, B_i]_{v_i^2}, B_i\big] + [2]_{v_i} B_j \K_i,\\
[2]_{v_i}\TT^{-1}_i(B_j) = \big[ B_i, [B_i, B_j ]_{v_i^2}\big] + [2]_{v_i} B_j \K_i.
\end{align}
Since $\TT_{ i}, \TT_{\omega_j}$ commute by Lemma \ref{lem:Lus}(a), applying $o(j)^l\TT_{\omega_j}^{-l}$ to these equalities, we have for $l\in \Z$,
\begin{align}\label{Br:brkt1}
[2]_{v_i}\TT_i(B_{j,l}) = \big[ [B_{j,l}, B_i]_{v_i^2}, B_i\big] + [2]_{v_i} B_{j,l} \K_i,\\\label{Br:brkt2}
[2]_{v_i}\TT^{-1}_i(B_{j,l}) = \big[ B_i, [B_i, B_{j,l} ]_{v_i^2}\big] + [2]_{v_i} B_{j,l} \K_i.
\end{align}
Apply $\TT_{\omega_i}^{-1}$ to \eqref{Br:brkt1}, we have
\begin{equation}\label{Br:brkt3}
[2]_{v_i}\TT_{\omega_i}^{-1}\TT_i(B_{j,l}) = \big[ [B_{j,l}, B_{i,1}]_{v_i^2}, B_{i,1}\big] + [2]_{v_i} B_{j,l} \K_i C.
\end{equation}
We now compute
\begin{align*}
[\TH_{i,1},B_j]\K_i&\overset{\eqref{iDRG3'}}{=}-\big[[B_{i,1},B_i]_{v_i^2},B_j\big]\\
&\overset{\qquad}{=}-\big[B_{i,1},[B_i,B_j]_{v_i^2}\big]+v_i^2 \big[B_i,[B_{i,1},B_j]_{v_i^{-2}}\big]\\
&\overset{\eqref{iDRG2'}}{=} \big[B_{i,1},[ B_{j,-1},B_{i,1}]_{v_i^2}\big]-v_i^2 \big[B_i,[B_{j,1},B_{i}]_{v_i^{-2}}\big] \\
&\overset{\eqref{Br:brkt2}}{=}\big[B_{i,1},[ B_{j,-1},B_{i,1}]_{v_i^2}\big] + [2]_{v_i} \TT_i^{-1}(B_{j,1})-[2]_{v_i} B_{j,1}\K_i \\
&\overset{\eqref{Br:brkt3}}{=}-[2]_{v_i} \TT_{\omega_i}^{-1} \TT_i(B_{j,-1})+[2]_{v_i}B_{j,-1}\K_i C + [2]_{v_i} \TT_i^{-1}(B_{j,1})-[2]_{v_i} B_{j,1}\K_i  \\
&\overset{\qquad}{=} [2]_{v_i}B_{j,-1}C\K_i-[2]_{v_i} B_{j,1}\K_i-[2]_{v_i}\big(\TT_{\omega_i}^{-1} \TT_i(B_{j,-1})-\TT_i^{-1}(B_{j,1})\big)\\
&\overset{\qquad}{=}[2]_{v_i}B_{j,-1}C\K_i-[2]_{v_i} B_{j,1}\K_i,
\end{align*}
where the last step follows from $\TT_i \TT_{\omega_i}^{-1} \TT_i= \TT_{\omega_i} \TT_{\omega_j}^{-2}\prod_{k\neq i,j} \TT^{c_{ik}}_{\omega_k}$, which is given by Lemma \ref{lem:Lus}(b).
Hence, $Y_{1,0}=0$, and by applying $\TT_{\omega_j}^{-l}$, we get $Y_{1,l}=0$.\par
(4)$c_{ij}=-3,c_{ji}=-1$. Without loss of generality, assume $o(i)=1$. In this case, we rewrite $\TT_i(B_j)$ in Lemma \ref{lem:Ti} as
\begin{align}
\TT_i(B_j)&=\frac{1}{[3]_{v_i}!}\bigg[\big[[B_j,B_i]_{v_i^3} , B_i\big]_{v_i}, B_i \bigg]_{v_i^{-1}}+\frac{1}{[3]_{v_i}!}v_i^{-1}[B_j,B_i]_{v_i^3} \K_i+ [B_j,B_i]_{v_i}\K_i,\\
\TT_i^{-1}(B_j)&=\frac{1}{[3]_{v_i}!}\bigg[ B_i,\big[ B_i, [B_i, B_j]_{v_i^3}\big]_{v_i} \bigg]_{v_i^{-1}}+\frac{1}{[3]_{v_i}!}v_i^{-1}[ B_i,B_j]_{v_i^3} \K_i+ [B_i,B_j]_{v_i}\K_i.
\end{align}
Since $\TT_i,\TT_{\omega_j}$ commute, applying $o(j)^l\TT_{\omega_j}^{-l}$ for $l\in \Z$ to these equalities, we have
\begin{align}
\TT_i(B_{j,l})&=\frac{1}{[3]_{v_i}!}\bigg[\big[[B_{j,l},B_i]_{v_i^3} , B_i\big]_{v_i}, B_i \bigg]_{v_i^{-1}}+\frac{1}{[3]_{v_i}!}v_i^{-1}[B_{j,l},B_i]_{v_i^3} \K_i+ [B_{j,l},B_i]_{v_i}\K_i,\\
\TT_i^{-1}(B_{j,l})&=\frac{1}{[3]_{v_i}!}\bigg[ B_i,\big[ B_i, [B_i, B_{j,l}]_{v_i^3}\big]_{v_i} \bigg]_{v_i^{-1}}+\frac{1}{[3]_{v_i}!}v_i^{-1}[ B_i,B_{j,l}]_{v_i^3} \K_i+ [B_i,B_{j,l}]_{v_i}\K_i.
\end{align}
In particular, for $l=-1$ and $1$ respectively, we have
\begin{align}
\label{Br:brkt4}[\TT_i(B_{j,-1}),B_i]_{v_i^{-3}}&=\frac{1}{[3]_{v_i}!}\bigg[\bigg[\big[[B_{j,-1},B_i]_{v_i^3} , B_i\big]_{v_i}, B_i \bigg]_{v_i^{-1}},B_i\bigg]_{v_i^{-3}}\\\notag
&+\frac{1}{[3]_{v_i}!}v_i^{-1}\big[[B_{j,-1},B_i]_{v_i^3},B_i\big]_{v_i^{-3}} \K_i+ \big[[B_{j,-1},B_i]_{v_i}, B_i\big]_{v_i^{-3}}\K_i,\\
\label{Br:brkt5}[B_i,\TT_i^{-1}(B_{j,1})]_{v_i^{-3}}&=\frac{1}{[3]_{v_i}!}\bigg[B_i,\bigg[ B_i,\big[ B_i, [B_i, B_{j,1}]_{v_i^3}\big]_{v_i} \bigg]_{v_i^{-1}}\bigg]_{v_i^{-3}}\\\notag
&+\frac{1}{[3]_{v_i}!}v_i^{-1}\big[ B_i, [ B_i,B_{j,1}]_{v_i^3}\big]_{v_i^{-3}} \K_i+\big[B_i, [B_i,B_{j,1}]_{v_i}\big]\K_i.
\end{align}
Apply $\TT_{\omega_i}^{-1}$ to \eqref{Br:brkt4}, since $\TT_{\omega_i}^{-1}\TT_i(B_i)=\TT_{\omega_i}^{-1} (B_i \K_i^{-1})=B_{i,1}\K_i^{-1}C^{-1}$, we have
\begin{align}
\label{Br:brkt6}\TT_{\omega_i}^{-1}\TT_i\big([B_{j,-1},B_{i }]_{v_i^{-3}}\big)\K_iC 
=&\frac{1}{[3]_{v_i}!}\bigg[\bigg[\big[[B_{j,-1},B_{i,1}]_{v_i^3} , B_{i,1}]\big]_{v_i}, B_{i,1} \bigg]_{v_i^{-1}},B_{i,1}\bigg]_{v_i^{-3}}\\\notag
 &+\frac{1}{[3]_{v_i}!}v_i^{-1}\big[[B_{j,-1},B_{i,1}]_{v_i^3},B_{i,1}\big]_{v_i^{-3}} \K_i C\\\notag
 &+ \big[[B_{j,-1},B_{i,1}]_{v_i}, B_{i,1}\big]_{v_i^{-3}}\K_iC.
\end{align}
On the other hand, we also rewrite the finite type Serre relation \eqref{eq:S4} as
\begin{align}\label{Se:brkt2}
\bigg[\bigg[\big[[B_{j},B_i]_{v_i^3} , B_i\big]_{v_i}, B_i \bigg]_{v_i^{-1}},B_i\bigg]_{v_i^{-3}}&=-v_i^{-1}(1+[3]_{v_i}^2)( B_{j} B_{i}^2+ B_{i}^2 B_{j})\K_i \\\notag
& +v_i^{-1}[4]_{v_i} (1+[2]_{v_i}^2) B_{i} B_{j} B_{i} \K_i -v_i^{-2}[3]^2_{v_i} B_{j} \K_i^2.
\end{align}
Apply $o(j)\TT^{-1}_{\omega_j}$ and we obtain
\begin{align}\label{Se:brkt3}
\bigg[\bigg[\big[[B_{j,1},B_i]_{v_i^3} , B_i\big]_{v_i}, B_i \bigg]_{v_i^{-1}},B_i\bigg]_{v_i^{-3}}&=-v_i^{-1}(1+[3]_{v_i}^2)( B_{j,1} B_{i}^2+ B_{i}^2 B_{j,1})\K_i \\\notag
& +v_i^{-1}[4]_{v_i} (1+[2]_{v_i}^2) B_{i} B_{j,1} B_{i} \K_i -v_i^{-2}[3]^2_{v_i} B_{j,1} \K_i^2.
\end{align}
Note the leading term (of degree $5$) on the RHS of \eqref{Br:brkt5} coincides with the LHS of \eqref{Se:brkt3}. We substitute it using  \eqref{Se:brkt3} and simplify as
\begin{align}\label{Se:br1}
[3]_{v_i}[2]_{v_i}[B_i,\TT_i^{-1}(B_{j,1})]_{v_i^{-3}}&=[3]_{v_i} \big[B_i,[B_{j,1},B_i]_{v_i^{-3}}\big]_{v_i} \K_i -v_i^{-2}[3]^2_{v_i} B_{j,1} \K_i^2.
\end{align}
Similarly, we apply $\TT_{\omega_j} \TT_{\omega_i}^{-1}$ to \eqref{Se:brkt2}, by Lemma \ref{lem:jac}(a),
\begin{align}\notag
\bigg[B_{i,1},\bigg[B_{i,1} ,\big[ B_{i,1},&[B_{i,1},B_{j,-1}]_{v_i^3}\big]_{v_i} \bigg]_{v_i^{-1}}\bigg]_{v_i^{-3}}=-v_i^{-1}(1+[3]_{v_i}^2)( B_{j,-1} B_{i,1}^2+ B_{i,1}^2 B_{j,-1})\K_i \\\label{Se:brkt4}
& +v_i^{-1}[4]_{v_i} (1+[2]_{v_i}^2) B_{i,1} B_{j,-1} B_{i,1} \K_i C-v_i^{-2}[3]^2_{v_i} B_{j,-1} \K_i^2 C^2.
\end{align}
and then we substitute the leading term of RHS of \eqref{Br:brkt6} using \eqref{Se:brkt4}. We obtain
\begin{align*}
[3]_{v_i}[2]_{v_i}\TT_{\omega_i}^{-1}\TT_i [B_{j,-1} ,B_{i}]_{v_i^{-3}} \K_i C&=[3]_{v_i} \big[[B_{i,1},B_{j,-1}]_{v_i^{-3}},B_{i,1}\big]_{v_i} \K_i C -v_i^{-2}[3]^2_{v_i} B_{j,-1} \K_i^2 C^2,
\end{align*}
which can be simplified as
\begin{align}\label{Se:br2}
 [2]_{v_i}\TT_{\omega_i}^{-1}\TT_i [B_{j,-1} ,B_{i}]_{v_i^{-3}} &= \big[[B_{i,1},B_{j,-1}]_{v_i^{-3}},B_{i,1}\big]_{v_i}  -v_i^{-2}[3]_{v_i} B_{j,-1} \K_i C.
\end{align}
We now compute $Y_{1,0}$ in this case.
\begin{align*}
[\TH_{i,1},B_j]\K_i&\overset{\eqref{iDRG3'}}{=}-\big[[B_{i,1},B_i]_{v_i^2},B_j\big]\\
&\overset{\qquad}{=}-\big[B_{i,1},[B_i,B_j]_{v_i^3 }\big]_{v_i^{-1}}+v_i^2 \big[B_i,[B_{i,1},B_j]_{v_i^{-3}}\big]_{v_i }\\
&\overset{\eqref{iDRG2'}}{=} \big[B_{i,1},[ B_{j,-1},B_{i,1}]_{v_i^3 }\big]_{v_i^{-1}}-v_i^2 \big[B_i,[B_{j,1},B_i]_{v_i^{-3}}\big]_{v_i}\\
&\overset{\qquad }{=}v_i^2 \big[[B_{i,1},B_{j,-1}]_{v_i^{-3}},B_{i,1}\big]_{v_i}-v_i^2 \big[B_i,[B_{j,1},B_i]_{v_i^{-3}}\big]_{v_i}\\
&\overset{\ \,(*)\,\ }{=}[3]_{v_i} B_{j,-1} \K_iC -[3]_{v_i}B_{j,1}\K_i\\
&\qquad +v_i^2[2]_{v_i}\bigg(\TT_{\omega_i}^{-1}\TT_i [B_{j,-1} ,B_{i}]_{v_i^{-3}} -[B_i,\TT_i^{-1}(B_{j,1})]_{v_i^{-3}}\K_i^{-1}\bigg)\\
&\overset{\qquad}{=}[3]_{v_i} B_{j,-1} \K_iC -[3]_{v_i}B_{j,1}\K_i+v_i^2[2]_{v_i}\TT_i^{-1}\bigg(\TT_i \TT_{\omega_i}^{-1}\TT_i [B_{j,-1} ,B_{i}]_{v_i^{-3}} -[B_i, B_{j,1} ]_{v_i^{-3}}\bigg)\\
&\overset{\ (**)\ }{=}[3]_{v_i} B_{j,-1} \K_iC -[3]_{v_i}B_{j,1}\K_i+v_i^2[2]_{v_i}\TT_i^{-1}\bigg({\color{red}-}[B_{j,2} ,B_{i,-1}]_{v_i^{-3}} -[B_i, B_{j,1} ]_{v_i^{-3}} \bigg),\\
&\overset{\eqref{iDRG2'}}{=}[3]_{v_i} B_{j,-1} \K_iC -[3]_{v_i}B_{j,1}\K_i
\end{align*}
where step (*) follows by applying \eqref{Se:br2} to the first term and applying \eqref{Se:br1} to the second term, and step (**) follows from $\TT_i \TT_{\omega_i}^{-1} \TT_i= \TT_{\omega_i} \TT_{\omega_j}^{-3} $ given in Lemma \ref{lem:Lus}(b). (also $o(j)=-1$ gives the red additional sign in this step) Hence, $Y_{1,0}=0$ and by applying $\TT_{\omega_j}^{-l}$ we have  $Y_{1,l}=0$ for $l\in \Z.$

\section{Verification of Serre relations}\label{Serre}
The goal of this section is to establish general Serre relations \eqref{iDRG5}-\eqref{iDRG6} in $\tUiD_{red}$. We first recover the general Serre relation \eqref{iDRG5} formulated in \cite{LW20b} for $c_{ij}=-1$, using a more straightforward approach compared with the original one. We generalize this approach and offer several formulations for the Serre relation for $c_{ij}=-2$. We obtain two symmetric formulations in \S \ref{symfun}. Using these symmetric formulations, we derive the relation \eqref{iDRG6} in terms of generating functions and finish the proof of Theorem \ref{DprBCF} in \S \ref{genfor}.

\subsection{Serre relation for $c_{ij}=-1$}\label{SeADE}
Let $c_{ij}=-1$. Recall the notation $\bS(k_1,k_2|l;i,j)$ introduced in \eqref{eq:Skk} and denote it by $\bS(k_1,k_2|l)$ for short. The Serre relation \eqref{iDRG4'}, together with the relation~ \eqref{iDRG1'}, is verified in \cite[\S 4.7-4.8]{LW20b}, using a spiral induction. In this section, we give a new proof for the Serre relation \eqref{iDRG4'}, without the help of relation \eqref{iDRG1'}. To begin with, we recall two technical lemmas from their paper.

\begin{lemma}[\text{\cite[Lemma 4.13]{LW20b}}]\label{lem:SSS}
For $k_1, k_2, l \in \Z$, we have
\begin{align*}
& \bS(k_1,k_2+1 |l) + \bS(k_1+1,k_2|l) -[2]_{v_i} \bS(k_1,k_2|l+1)
 \\
&=\Sym_{k_1,k_2} \Big( -[\Theta_{i, k_2-k_1+1}, B_{jl}]_{v_i^{-2}} C^{k_1} +v_i^{-2} [\Theta_{i, k_2-k_1-1}, B_{jl}]_{v_i^{-2}} C^{k_1+1} \Big) \K_i  .
\end{align*}
\end{lemma}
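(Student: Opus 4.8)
The plan is to push everything into nested $q$-commutators, transport the spectral shift with the mixed relation \eqref{iDRG2'}, and only at the very end feed in the same-index relation \eqref{iDRG3'} to produce the $\Theta$-terms; note that this uses only \eqref{iDRG2'}, \eqref{iDRG3'} and the $q$-Jacobi identities of Lemma~\ref{lem:jac}, none of which depends on \eqref{iDRG1'}, so there is no circularity. First I would rewrite the Serre expression as an iterated $q$-bracket. The multi-index lift of the identity used to rewrite the finite-type Serre element in \S\ref{Verf2} gives
\[
\bS(k_1,k_2|l) = \Sym_{k_1,k_2}\big[B_{i,k_1},[B_{i,k_2},B_{j,l}]_{v_i}\big]_{v_i^{-1}},
\]
which is checked by a direct expansion. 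Writing $S(k_1,k_2|l)=[B_{i,k_1},[B_{i,k_2},B_{j,l}]_{v_i}]_{v_i^{-1}}$ for the un-symmetrized bracket, one observes that $\bS(k_1,k_2+1|l)+\bS(k_1+1,k_2|l)=\Sym_{k_1,k_2}\big(S(k_1,k_2+1|l)+S(k_1+1,k_2|l)\big)$, so the entire left-hand side equals $\Sym_{k_1,k_2}P$ with $P:=S(k_1,k_2+1|l)+S(k_1+1,k_2|l)-[2]_{v_i}S(k_1,k_2|l+1)$.

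Next I would transport the shift off the $B_j$-index. Applying \eqref{iDRG2'} (for $c_{ij}=-1$) to the innermost bracket of $S(k_1,k_2|l+1)$ converts $[B_{i,k_2},B_{j,l+1}]_{v_i}$ into $v_i[B_{i,k_2+1},B_{j,l}]_{v_i^{-1}}$, and the ensuing change of bracket parameter is absorbed via $[X,Y]_{v_i^{-1}}=[X,Y]_{v_i}+(v_i-v_i^{-1})YX$. A short computation then yields $P=-v_i^2\,S(k_1,k_2+1|l)+S(k_1+1,k_2|l)$ plus one explicit correction term proportional to $[B_{i,k_1},B_{j,l}B_{i,k_2+1}]_{v_i^{-1}}$. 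At this stage every summand of $P$ is built from two $B_i$'s (with indices totalling $k_1+k_2+1$) and a single $B_{j,l}$.

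The heart of the argument is to locate the left-hand side of \eqref{iDRG3'} inside $\Sym_{k_1,k_2}P$. Expanding into monomials of the three shapes $B_iB_iB_j$, $B_iB_jB_i$, $B_jB_iB_i$, the middle monomials $B_{i,\cdot}B_{j,l}B_{i,\cdot}$ cancel identically once the symmetrization $\Sym_{k_1,k_2}$ is performed, while the surviving outer monomials reassemble into a single scalar multiple of $\Sym_{k_1,k_2}[Z_{k_1,k_2},B_{j,l}]_{v_i^{-2}}$, where $Z_{k_1,k_2}:=[B_{i,k_1},B_{i,k_2+1}]_{v_i^{-2}}-v_i^{-2}[B_{i,k_1+1},B_{i,k_2}]_{v_i^2}$ is exactly the left-hand side of \eqref{iDRG3'}. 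Substituting \eqref{iDRG3'} replaces $Z_{k_1,k_2}$ by its $\Theta$-expression; the central factors $C$ and $\K_i$ pull out, and the four $\Theta$-terms collapse under $\Sym_{k_1,k_2}$ into the two displayed in the statement, the scalar matching precisely. (Alternatively one can isolate $Z_{k_1,k_2}$ directly from the nested commutators using Lemma~\ref{lem:jac}(3) rather than by monomial expansion.)

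I expect the main obstacle to be Step~3: verifying that the middle $B_iB_jB_i$ monomials really cancel after symmetrization, and that the same overall scalar is forced consistently across both the $B_iB_iB_j$ family and the $B_jB_iB_i$ family. This is where the $q$-integer coefficients $v_i^{\pm1},v_i^{\pm2}$ and the shift of a single index by one must line up exactly; if instead one routes through the $q$-Jacobi identity, the delicacy migrates to choosing the right specialization of the free parameter $w$ in Lemma~\ref{lem:jac}(3) so that the inner same-index bracket emerges with the parameters $v_i^{-2}$ and $v_i^2$ demanded by \eqref{iDRG3'}. The remaining relabelling of $C$- and $\K_i$-powers is then purely bookkeeping.
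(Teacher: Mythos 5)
Your proof is correct, and the computations it leaves implicit do check out: the nested-bracket identity $\bS(k_1,k_2|l)=\Sym_{k_1,k_2}\big[B_{i,k_1},[B_{i,k_2},B_{j,l}]_{v_i}\big]_{v_i^{-1}}$ holds by direct expansion; the shift via \eqref{iDRG2'} gives exactly $P=-v_i^2S(k_1,k_2+1|l)+S(k_1+1,k_2|l)-(v_i^3-v_i^{-1})\big[B_{i,k_1},B_{j,l}B_{i,k_2+1}\big]_{v_i^{-1}}$; after $\Sym_{k_1,k_2}$ the eight middle monomials $B_{i,\cdot}B_{j,l}B_{i,\cdot}$ cancel in pairs, and the surviving outer monomials equal $-\tfrac{v_i^2}{2}\Sym_{k_1,k_2}[Z_{k_1,k_2},B_{j,l}]_{v_i^{-2}}$ with your $Z_{k_1,k_2}$; since the four $\Theta$-terms of \eqref{iDRG3'} pair up two-by-two under $\Sym_{k_1,k_2}$, the factor $2$ against $-\tfrac{v_i^2}{2}$ lands precisely on the scalars $-1$ and $v_i^{-2}$ of the statement. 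Your non-circularity remark is also accurate: the paper proves \eqref{iDRG2'} and \eqref{iDRG3'} without invoking \eqref{iDRG1'}.

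As for comparison: this paper does not prove the lemma at all — it is imported verbatim from \cite[Lemma 4.13]{LW20b}. The closest in-paper argument is the $c_{ij}=-2$ analogue, Proposition \ref{symS}, which is organized quite differently: there the symmetry is deliberately broken via the asymmetric pieces $R(k_1,k_2,k_3|l)$ satisfying \eqref{ind0}, everything is expanded into monomials, and \eqref{iDRG3'} enters in the ``swap'' form $[B_{i,k_3+1},B_{i,k_1}]_{v_i^2}=\Theta_i(k_1,k_3)-[B_{i,k_1+1},B_{i,k_3}]_{v_i^2}$, with recurring blocks recognized and re-substituted. Your route — nested $q$-brackets, transporting the degree shift with \eqref{iDRG2'}, then isolating the exact left-hand side $Z_{k_1,k_2}$ of \eqref{iDRG3'} — is closer in spirit to the paper's proof of Proposition \ref{prop:recur} and is tighter for the two-$B_i$ case, producing the scalar in one stroke without any $R$-bookkeeping. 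The trade-off is that the wholesale cancellation of middle monomials you rely on is special to rank two: with three $B_i$'s the analogous middle terms survive (the right-hand side of \eqref{ind6} contains genuinely nested double brackets such as $\big[[\Theta_i(k_2,k_3),B_{j,l}]_{v_i^{-2}},B_{i,k_1}\big]_{v_i^{2}}$), which is exactly why Proposition \ref{symS} resorts to the $R$-decomposition; so your method, while cleaner here, does not directly scale to the $c_{ij}=-2$ Serre relation.
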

\begin{lemma}[\text{\cite[Lemma 4.9]{LW20b}}] \label{lem:SS}
For $k_1, k_2, l \in \Z$, we have
\begin{align*}
& \bS(k_1,k_2+1 |l) + \bS(k_1+1,k_2|l) -[2]_{v_i} \bS(k_1+1,k_2+1|l-1)
 \\
&= \Sym_{k_1,k_2}\Big( -[B_{jl},\Theta_{i, k_2-k_1+1} ]_{v_i^{-2}} C^{k_1} +v_i^{-2} [B_{jl},\Theta_{i, k_2-k_1-1}]_{v_i^{-2}} C^{k_1+1} \Big) \K_i .
\end{align*}
\end{lemma}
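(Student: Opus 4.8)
The plan is to \emph{deduce Lemma \ref{lem:SS} from Lemma \ref{lem:SSS}} by applying the antiautomorphism $\Psi$ of $\tUiD_{red}$ from the preceding Corollary. The two statements share the same first two terms on the left, and the remaining data is interchanged in exactly the way $\Psi$ acts: on the right, Lemma \ref{lem:SSS} carries the $q$-bracket $[\Theta_{i,\bullet},B_{j,\bullet}]_{v_i^{-2}}$ while Lemma \ref{lem:SS} carries the reversed bracket $[B_{j,\bullet},\Theta_{i,\bullet}]_{v_i^{-2}}$, and this reversal is precisely the effect of an antiautomorphism; on the left, the third $\bS$-term is shifted from $(k_1,k_2\,|\,l+1)$ to $(k_1+1,k_2+1\,|\,l-1)$, which is the index reflection $k\mapsto -k$ realized by $\Psi$ (compare $\Psi(\Theta_i(s,r))=\Theta_i(-r-1,-s-1)$ recorded after \eqref{eq:Th1}). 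Since Lemma \ref{lem:SSS} is already available, this route avoids re-doing any $q$-bracket gymnastics.

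Two ingredients are needed. First, since $\Psi$ is an antiautomorphism fixing $\K_i$ and sending $B_{i,k}\mapsto B_{i,-k}$, reversing the three factors in the defining expression \eqref{eq:Skk} and relabelling $k_1\leftrightarrow k_2$ under $\Sym_{k_1,k_2}$ gives
\[
\Psi\big(\bS(k_1,k_2\,|\,l)\big)=\bS(-k_1,-k_2\,|\,-l);
\]
the palindromic shape of \eqref{eq:Skk} (first and last terms swap, the middle term is self-reversed up to the $\Sym$) is what keeps the image inside the same family. Second, using $\Psi(\Theta_{i,r})=C^{-r}\Theta_{i,r}$, $\Psi(B_{j,l})=B_{j,-l}$, $\Psi(C)=C^{-1}$ and the centrality of $C,\K_i$, a one-line computation gives
\[
\Psi\big([\Theta_{i,r},B_{j,l}]_{v_i^{-2}}\big)=C^{-r}\,[B_{j,-l},\Theta_{i,r}]_{v_i^{-2}},
\]
which is the promised reversal of the bracket together with the bookkeeping factor $C^{-r}$.

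With these in hand I would apply $\Psi$ to the whole identity of Lemma \ref{lem:SSS} and then substitute $(k_1,k_2,l)\mapsto(-k_1-1,-k_2-1,-l)$ for the free indices. On the left, after this substitution the three terms become $\bS(k_1+1,k_2\,|\,l)$, $\bS(k_1,k_2+1\,|\,l)$ and $[2]_{v_i}\bS(k_1+1,k_2+1\,|\,l-1)$, i.e. exactly the left-hand side of Lemma \ref{lem:SS}; on the right, the factor $C^{-r}$ produced above combines with $\Psi(C^{k_1})=C^{-k_1}$ to reassemble the powers $C^{k_1},C^{k_1+1}$, after which $\Sym$-invariance under $k_1\leftrightarrow k_2$ flips $\Theta_{i,k_1-k_2\pm1}$ into $\Theta_{i,k_2-k_1\pm1}$, yielding precisely the right-hand side of Lemma \ref{lem:SS}. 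The only real obstacle is this index bookkeeping: one must track how $\Psi(C)=C^{-1}$ and the $C^{-r}$ from $\Psi(\Theta_{i,r})$ conspire to shift the $C$-powers, and verify that the $l$-shift lands on $l-1$ rather than $l+1$ — both come out correctly under $(k_1,k_2,l)\mapsto(-k_1-1,-k_2-1,-l)$. As an independent check, and a proof not relying on Lemma \ref{lem:SSS}, one could instead argue directly: write $\bS(a,b\,|\,l)=\Sym_{a,b}\big[B_{i,a},[B_{i,b},B_{j,l}]_{v_i}\big]_{v_i^{-1}}$, use \eqref{iDRG2'} to trade an $l$-shift for a $k$-shift inside the inner bracket, and convert the resulting $[B_{i,\bullet},B_{i,\bullet}]$ brackets into $\Theta$-terms via \eqref{iDRG3'}, with Lemma \ref{lem:jac} organizing the reassociations.
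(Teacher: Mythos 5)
Your derivation is correct, and I checked the bookkeeping: $\Psi(\bS(k_1,k_2\,|\,l))=\bS(-k_1,-k_2\,|\,-l)$ holds exactly as you say (the reversal of the three monomials in \eqref{eq:Skk} is absorbed by the swap $k_1\leftrightarrow k_2$ under $\Sym$), the identity $\Psi\big([\Theta_{i,r},B_{j,l}]_{v_i^{-2}}\big)=C^{-r}[B_{j,-l},\Theta_{i,r}]_{v_i^{-2}}$ is right (and remains valid for $r\le 0$ under the conventions $\Theta_{i,0}=(v_i-v_i^{-1})^{-1}$, $\Theta_{i,r}=0$ for $r<0$), and after the substitution $(k_1,k_2,l)\mapsto(-k_1-1,-k_2-1,-l)$ the $C$-powers land on $C^{k_2},C^{k_2+1}$ attached to $\Theta_{i,k_1-k_2\pm1}$, which the final $k_1\leftrightarrow k_2$ swap turns into precisely the right-hand side of Lemma \ref{lem:SS}. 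This is, however, a genuinely different route from the proof the paper points to: the statement is quoted from \cite[Lemma 4.9]{LW20b}, where it is established by direct $q$-bracket manipulation with the relations \eqref{iDRG2'}--\eqref{iDRG3'} (essentially your fallback sketch, whose starting identity $\bS(a,b\,|\,l)=\Sym_{a,b}\big[B_{i,a},[B_{i,b},B_{j,l}]_{v_i}\big]_{v_i^{-1}}$ is indeed correct), and independently of Lemma \ref{lem:SSS} --- which matters, since your deduction would be circular if Lemma \ref{lem:SSS} were itself obtained from Lemma \ref{lem:SS}; as it stands, Lemma \ref{lem:SSS} has its own direct proof in \cite{LW20b}, so no circularity arises. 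What your approach buys is economy and conceptual clarity: one direct computation plus the symmetry $\Psi$ replaces two, and it is exactly the strategy this paper itself adopts in the $c_{ij}=-2$ case, where \eqref{ind7} is obtained from \eqref{ind6} ``by applying $\Psi$''; note also that your argument lives in $\tUiD_{red}$, where $\Psi$ is available from the Corollary before any general Serre relations are established, so the logical order of Section \ref{Serre} is respected.
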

Denote
\begin{align*}
\bS(w_1,w_2|z)=\Sym_{w_1,w_2}\big\{\B_{i}(w_1)\B_{i}(w_2)\B_{j}(z)
-[2]_{v_i}\B_{i}(w_1)\B_{j}(z)\B_{i}(w_2)+\B_{j}(z)\B_{i}(w_1)\B_{i}(w_2)\big\}.
\end{align*}
Lemma \ref{lem:SSS} and \ref{lem:SS} can be written in terms of generating functions respectively as
\begin{align}
\label{gen7}&(w_1^{-1}+w_2^{-1}-[2]_{v_i}z^{-1})\bS(w_1,w_2|z)
\\\notag
=&\Sym_{w_1,w_2} \frac{\bDel(w_1 w_2)}{v_i-v_i^{-1}} (v_i^{-2} w_1^{-1}-w_2^{-1})[\bTH_i(w_2),\B_{j}(z)]_{v_i^{-2}}\K_i,
\end{align}
and
\begin{align}
\label{gen6}&(w_1+w_2-[2]_{v_i}z)\bS(w_1,w_2|z)
\\\notag
=&\Sym_{w_1,w_2} \frac{\bDel(w_1w_2)}{v_i-v_i^{-1}} (v_i^{-2} w_2-w_1)[\B_{j}(z),\bTH_i(w_2)]_{v_i^{-2}}\K_i.
\end{align}
Then we calculate $\eqref{gen7}\times [2]_{v_i}z+\eqref{gen6} \times (w_1^{-1}+w_2^{-1})$ and we obtain
\begin{align}\notag
&(w_1-v_i^2 w_2)(w_2^{-1}-v_i^{-2}w_1^{-1})\bS(w_1,w_2|z)
\\\label{gen8}
=&[2]_{v_i} z\Sym_{w_1,w_2} \frac{\bDel(w_1w_2)}{v_i-v_i^{-1}}(v_i^{-2} w_1^{-1}-w_2^{-1})[\bTH_i(w_2),\B_{j}(z)]_{v_i^{-2}}\K_i
\\\notag
&+\Sym_{w_1,w_2} \frac{\bDel(w_1w_2)}{v_i-v_i^{-1}} (v_i^{-2} w_2-w_1)(w_1^{-1}+w_2^{-1})[\B_{j}(z),\bTH_i(w_2)]_{v_i^{-2}}\K_i.
\end{align}
We also calculate $\eqref{gen7} \times (w_1+w_2)+\eqref{gen6}\times [2]_{v_i} z^{-1}$ and we obtain
\begin{align}\notag
&(w_1-v_i^2 w_2)(w_2^{-1}-v_i^{-2}w_1^{-1})\bS(w_1,w_2|z)
\\\label{gen8'}
=&\Sym_{w_1,w_2} \frac{\bDel(w_1w_2)}{v_i-v_i^{-1}}(v_i^{-2} w_1^{-1}-w_2^{-1})(w_1+w_2)[\bTH_i(w_2),\B_{j}(z)]_{v_i^{-2}}\K_i
\\\notag
&+[2]_{v_i} z^{-1}\Sym_{w_1,w_2} \frac{\bDel(w_1w_2)}{v_i-v_i^{-1}} (v_i^{-2} w_2-w_1)[\B_{j}(z),\bTH_i(w_2)]_{v_i^{-2}}\K_i.
\end{align}
Simplify \eqref{gen8} as
\begin{align}
\bS(w_1,w_2|z) \label{gen9}
=&-\Sym_{w_1,w_2} \frac{\bDel(w_1w_2)}{v_i-v_i^{-1}}\frac{[2]_{v_i} z}{w_1-v_i^2 w_2}[\bTH_i(w_2),\B_{j}(z)]_{v_i^{-2}}\K_i
\\\notag
&-\Sym_{w_1,w_2} \frac{\bDel(w_1w_2)}{v_i-v_i^{-1}} \frac{w_1+w_2}{w_1-v_i^2 w_2}[\B_{j}(z),\bTH_i(w_2)]_{v_i^{-2}}\K_i,
\end{align}
which is exactly \eqref{iDRG5}.

\subsection{Symmetric formulation}\label{symfun}
We now forward to the case $c_{ij}=-2$. Two symmetric formulations \eqref{ind6} and \eqref{ind7}, generalizing Lemma \ref{lem:SSS} and \ref{lem:SS}, are formulated and verified in this section. Denote
\begin{align*}
S(k_1,k_2,k_3|l)=\Sym_{k_1,k_2,k_3}\sum_{s=0}^3(-1)^s
\qbinom{3}{s}_{v_i}
B_{i,k_1}\cdots B_{i,k_s}B_{j,l} B_{i,k_s+1} \cdots B_{i,k_3}.
\end{align*}
Note that $S(k_1,k_2,k_3|l)$ is symmetric with respect to the first three components.
\begin{proposition}\label{symS}
We have, for any $k_1,k_2,k_3$,
\begin{align}
\label{ind6}&S(k_1,k_2,k_3+1|l)+S(k_1+1,k_2,k_3|l)+S(k_1,k_2+1,k_3|l)-[3]_{v_i}S(k_1,k_2,k_3|l+1)\\\notag
=&\frac{1}{2}\Sym_{k_1,k_2,k_3}\Big(-v_i^{-1}[2]_{v_i}\big[[\Theta_i(k_2,k_3),B_{j,l}]_{v_i^{-2}},B_{i,k_1}\big]_{v_i^{2}}+\big[\Theta_i(k_2,k_3),[B_{i,k_1},B_{j,l}]_{v_i^2}\big]_{v_i^{-4}}\Big)
\end{align}
\end{proposition}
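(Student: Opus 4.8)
The plan is to lift the mechanism of Lemma~\ref{lem:SSS} from the degree-two $q$-bracket to the degree-three one. First I would record the iterated-$q$-bracket form of $S$: expanding $q$-brackets and symmetrising shows that
\[
S(k_1,k_2,k_3|l)=\Sym_{k_1,k_2,k_3}\big[\big[[B_{j,l},B_{i,k_1}]_{v_i^{2}},B_{i,k_2}\big],B_{i,k_3}\big]_{v_i^{-2}},
\]
where the three bracket exponents $v_i^{2},1,v_i^{-2}$ are exactly those reproducing the $q$-binomials $\qbinom{3}{s}_{v_i}$ (as one verifies by expanding the brackets on the extreme terms $s=0,3$). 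In this form the unique place where an index of $B_{j,l}$ sits next to an index of $B_{i,\cdot}$ is the innermost bracket $[B_{j,l},B_{i,k_1}]_{v_i^{2}}$, which after Lemma~\ref{lem:jac}(1) equals $-v_i^{2}[B_{i,k_1},B_{j,l}]_{v_i^{-2}}$ and is hence precisely the site where relation~\eqref{iDRG2'} acts.

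Next I would use \eqref{iDRG2'} (which for $c_{ij}=-2$ reads $[B_{i,k},B_{j,l+1}]_{v_i^{2}}=v_i^{2}[B_{i,k+1},B_{j,l}]_{v_i^{-2}}$) to trade the $l$-shift present in $[3]_{v_i}\,S(k_1,k_2,k_3|l+1)$ for a $k$-shift at the innermost slot. After symmetrisation the innermost index runs over all three of $k_1,k_2,k_3$, and this is what one matches against the three terms $S(k_1,k_2,k_3+1|l)+S(k_1+1,k_2,k_3|l)+S(k_1,k_2+1,k_3|l)$; the coefficient $[3]_{v_i}=v_i^{2}+1+v_i^{-2}$ is exactly the sum of the three nested exponents as the shift is applied at each position in turn. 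Because \eqref{iDRG2'} flips the innermost exponent from $v_i^{2}$ to $v_i^{-2}$, restoring the symmetric nested form forces one to interchange a pair of $B_{i,\cdot}$ factors, which is where commutators $[B_{i,k_a},B_{i,k_b}]_{v_i^{\pm2}}$ get created.

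I would then feed each such pair into relation~\eqref{iDRG3'}, recognising the surviving combination as $\Theta_i(k_2,k_3)$ in the notation~\eqref{eq:Th1}, and use the $q$-Jacobi identity Lemma~\ref{lem:jac}(3) (with parts (1),(2) to shuttle the scalars $v_i^{\pm2},v_i^{\pm4}$ and the powers of $C$ into position) to reassemble the result into the two nested shapes $\big[[\Theta_i(k_2,k_3),B_{j,l}]_{v_i^{-2}},B_{i,k_1}\big]_{v_i^{2}}$ and $\big[\Theta_i(k_2,k_3),[B_{i,k_1},B_{j,l}]_{v_i^{2}}\big]_{v_i^{-4}}$ on the right-hand side. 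The factor $\tfrac12$ there records that $\Theta_i(k_2,k_3)=\Theta_i(k_3,k_2)$, so each such term is produced twice under $\Sym_{k_1,k_2,k_3}$. As in Lemma~\ref{lem:SSS}, I expect no Serre relation to be needed: no genuinely cubic-in-$B_i$ term survives, so the identity should be a formal consequence of \eqref{iDRG2'}, \eqref{iDRG3'} and the $q$-Jacobi identities alone.

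The main obstacle is the bookkeeping in the middle step. Since $k_1,k_2,k_3$ are independent there is no $q$-Serre collapse available (in sharp contrast to the equal-index relation~\eqref{iDRG5'}), so one must track every $v_i$-power and every power of $C$ while reconciling the ``innermost slot, then symmetrise'' expression with the ``shift distributed over three $S$-terms'' expression, and confirm that all truly cubic $B_i$-contributions cancel. To shorten the verification I would normalise $l=0$ using the degree-shift automorphism $\TT_{\omega_j}$ (which fixes every $i$-generator, including $\Theta_{i,m}$, and sends $B_{j,l}\mapsto o(j)B_{j,l-1}$) and, if convenient, simplify the $B_i$-indices with $\TT_{\omega_i}$; I would then handle the inner double bracket as a degree-two sub-computation in the spirit of Lemma~\ref{lem:SSS} before symmetrising over the outer index.
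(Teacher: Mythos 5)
Your proposal coincides with the paper's proof in all essentials: the paper likewise trades the $l$-shift in $[3]_{v_i}S(k_1,k_2,k_3|l+1)$ for $k$-shifts via \eqref{iDRG2'}, converts the resulting $[B_{i,k_a+1},B_{i,k_b}]_{v_i^{2}}$-brackets into $\Theta_i(\cdot,\cdot)$ via \eqref{iDRG3'} with no Serre relation needed, and obtains the factor $\tfrac12$ from $\Theta_i(k_2,k_3)=\Theta_i(k_3,k_2)$; your ``innermost slot, then symmetrise'' device is exactly the paper's auxiliary element $R(k_1,k_2,k_3|l)$ and the decomposition \eqref{ind0}, merely written in nested $q$-bracket form. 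The one slip is a sign: since for equal indices $\big[\big[[y,x]_{v_i^{2}},x\big],x\big]_{v_i^{-2}}=yx^3-[3]_{v_i}xyx^2+[3]_{v_i}x^2yx-x^3y$, your nested expression equals $-S(k_1,k_2,k_3|l)$ rather than $S(k_1,k_2,k_3|l)$, which is harmless to the argument.
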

The following relation can be obtained from \eqref{ind6} by applying $\Psi$.
\begin{align}\notag
&S(k_1-1,k_2,k_3|l)+S(k_1 ,k_2-1,k_3 |l)+S(k_1 ,k_2 ,k_3-1|l)-[3]_{v_i} S(k_1 ,k_2,k_3|l-1)\\
\label{ind7}=&\frac{1}{2}v_i^{-1}[2]_{v_i}\Sym_{k_1,k_2,k_3}\big[B_{i,k_3},[B_{j,l},\Theta_i(k_1-1,k_2-1)]_{v_i^{-2}}\big]_{v_i^2} \\\notag
&- \frac{1}{2}\Sym_{k_1,k_2,k_3} \big[[B_{j,l},B_{i,k_3 }]_{v_i^2},\Theta_i(k_1-1,k_2-1)\big]_{v_i^{-4}}
\\\notag
\end{align}
\begin{proof}[Proof of Proposition \ref{symS}]
Recall the definition of $\Theta_i(s,r)$ from \eqref{eq:Th1}. Since $\Theta_i(s,r)=\Theta_i(r,s)$, we have $\Sym_{r,s} \Theta_i(s,r)=2\Theta_i(s,r)$. We rewrite \eqref{ind6} in the following equivalent form,
\begin{align}\notag
&S(k_1,k_2,k_3+1|l)+S(k_1+1,k_2,k_3|l)+S(k_1,k_2+1,k_3|l)-[3]_{v_i}S(k_1,k_2,k_3|l+1)\\\label{ind5-1}
=&-v_i^{-1}[2]_{v_i}\big[[\Theta_i(k_2,k_3),B_{j,l}]_{v_i^{-2}},B_{i,k_1}\big]_{v_i^{2}}+\big[\Theta_i(k_2,k_3),[B_{i,k_1},B_{j,l}]_{v_i^2}\big]_{v_i^{-4}}
\\\notag
&-v_i^{-1}[2]_{v_i}\big[[\Theta_i(k_1,k_3),B_{j,l}]_{v_i^{-2}},B_{i,k_2}\big]_{v_i^{2}}+\big[\Theta_i(k_1,k_3),[B_{i,k_2},B_{j,l}]_{v_i^2}\big]_{v_i^{-4}}
\\\notag
&-v_i^{-1}[2]_{v_i}\big[[\Theta_i(k_1,k_2),B_{j,l}]_{v_i^{-2}},B_{i,k_3}\big]_{v_i^{2}}+\big[\Theta_i(k_1,k_2),[B_{i,k_3},B_{j,l}]_{v_i^2}\big]_{v_i^{-4}}.
\end{align}
Denote
\begin{align}\notag
R(k_1,k_2,k_3|l)=\Sym_{k_1,k_2} \bigg(B_{i,k_1}B_{i,k_2}[B_{i,k_3},B_{j,l}]_{v_i^{2}}
&-v_i^{-1}[2]_{v_i}B_{i,k_1}[B_{i,k_3},B_{j,l}]_{v_i^{2}}B_{i,k_2}\\
&\quad+v_i^{-2}[B_{i,k_3},B_{j,l}]_{v_i^{2}}B_{i,k_1}B_{i,k_2}\bigg).
\end{align}
Note that $R(k_1,k_2,k_3|l)$ is only symmetric with respect to its first two components. In fact, $R(k_1,k_2,k_3|l)$ plays the role of breaking the symmetry of $S(k_1,k_2,k_3|l)$ as it satisfies
\begin{equation}\label{ind0}
S(k_1,k_2,k_3|l)=R(k_1,k_2,k_3|l)+R(k_1,k_3,k_2|l)+R(k_2,k_3,k_1|l).
\end{equation}
We compute
\begin{align}\notag
&S(k_1,k_2,k_3+1|l)-[3]_{v_i} R(k_1,k_2,k_3|l+1)\\\label{ind1}
=&\bigg\{(1+v_i^2)B_{i,k_1}[B_{i,k_3+1},B_{i,k_2}]_{v_i^2}B_{j,l}+[B_{i,k_3+1},B_{i,k_1}]_{v^2_i}B_{i,k_2}B_{j,l}\\\notag
&-[3]_{v_i}[B_{i,k_3+1},B_{i,k_1}]_{v_i^2}B_{j,l} B_{i,k_2}-v_i^{-2}[3]_{v_i}B_{i,k_1} B_{j,l} [B_{i,k_3+1},B_{i,k_2}]_{v_i^2}\\\notag
&+v_i^{-2}B_{j,l} B_{i,k_1} [B_{i,k_3+1},B_{i,k_2}]_{v_i^2}+(v_i^{-2}+v_i^{-4})B_{j,l}[B_{i,k_3+1},B_{i,k_1}]_{v_i^2}B_{i,k_2}\bigg\}\\\notag
&+\{k_1\leftrightarrow k_2\}.
\end{align}
where $\{k_1\leftrightarrow k_2\}$ represents the element obtained by swapping $k_1,k_2$ in the first curly brackets.\par
Rewrite \eqref{ind1} using the symmetrizer as
\begin{align}\notag
&S(k_1,k_2,k_3+1|l)-[3]_{v_i} R(k_1,k_2,k_3|l+1)\\\label{ind2}
=&\bigg\{(1+v_i^2)B_{i,k_1}[B_{i,k_3+1},B_{i,k_2}]_{v_i^2}B_{j,l}+[B_{i,k_3+1},B_{i,k_2}]_{v^2_i}B_{i,k_1}B_{j,l}\\\notag
&-[3]_{v_i}[B_{i,k_3+1},B_{i,k_2}]_{v_i^2}B_{j,l}B_{i,k_1}-v_i^{-2}[3]_{v_i}B_{i,k_1} B_{j,l} [B_{i,k_3+1},B_{i,k_2}]_{v_i^2}\\\notag
&+v_i^{-2}B_{j,l} B_{i,k_1} [B_{i,k_3+1},B_{i,k_2}]_{v_i^2}+(v_i^{-2}+v_i^{-4})B_{j,l}[B_{i,k_3+1},B_{i,k_2}]_{v_i^2}B_{i,k_1}\bigg\}\\\notag
&+\{k_1\leftrightarrow k_2\}.
\end{align}
On the other hand, the relation \eqref{iDRG3'} implies that
\begin{align}\label{ind21}
[B_{i,k_3+1},B_{i,k_1}]_{v_i^2}&=\Theta_i(k_1,k_3)-[B_{i,k_1+1},B_{i,k_3}]_{v_i^2},\\\label{ind22}
[B_{i,k_3+1},B_{i,k_2}]_{v_i^2}&=\Theta_i(k_2,k_3)-[B_{i,k_2+1},B_{i,k_3}]_{v_i^2}.
\end{align}

Substitute \eqref{ind21} and \eqref{ind22} into \eqref{ind2}, and we have
\begin{align}\notag
&S(k_1,k_2,k_3+1|l)-[3]_{v_i} R(k_1,k_2,k_3|l+1)\\\label{ind3}
=&-\bigg\{(1+v_i^2)B_{i,k_1}[B_{i,k_2+1},B_{i,k_3}]_{v_i^2}B_{j,l}+[B_{i,k_2+1},B_{i,k_3}]_{v_i^2}B_{i,k_1}B_{j,l}\\\notag
&-[3]_{v_i}[B_{i,k_2+1},B_{i,k_3}]_{v_i^2}B_{j,l}B_{i,k_1}-v_i^{-2}[3]_{v_i}B_{i,k_1} B_{j,l} [B_{i,k_2+1},B_{i,k_3}]_{v_i^2}\\\notag
&+v_i^{-2}B_{j,l} B_{i,k_1} [B_{i,k_2+1},B_{i,k_3}]_{v_i^2}+(v_i^{-2}+v_i^{-4})B_{j,l}[B_{i,k_2+1},B_{i,k_3}]_{v_i^2}B_{i,k_1}\bigg\}\\\notag
&-\{k_1\leftrightarrow k_2\}\\\notag
&+Q_{1,2},
\end{align}
where $Q_{1,2}$ denotes all terms involving the imaginary root vectors
\begin{align*}
Q_{1,2}=&-v_i^{-1}[2]_{v_i}\big[[\Theta_i(k_2,k_3),B_{j,l}]_{v_i^{-2}},B_{i,k_1}\big]_{v_i^{2}}+\big[\Theta_i(k_2,k_3),[B_{i,k_1},B_{j,l}]_{v_i^2}\big]_{v_i^{-4}}
\\
&-v_i^{-1}[2]_{v_i}\big[[\Theta_i(k_1,k_3),B_{j,l}]_{v_i^{-2}},B_{i,k_2}\big]_{v_i^{2}}+\big[\Theta_i(k_1,k_3),[B_{i,k_2},B_{j,l}]_{v_i^2}\big]_{v_i^{-4}}.
\end{align*}
We recognize that the first $\bigg\{\ \bigg\}$-term on the RHS of \eqref{ind3} is the same as the $\bigg\{\ \bigg\}$-term on the RHS of \eqref{ind2}, up to a swap of indices $k_2\leftrightarrow k_3$. Hence, we replace the one in \eqref{ind3} by \eqref{ind2}. We do the same thing for the term $\{k_1\leftrightarrow k_2\}$ in \eqref{ind3}.
\begin{align}\notag
&S(k_1,k_2,k_3+1|l)-[3]_{v_i}R(k_1,k_2,k_3|l+1)\\\notag
=&-\bigg(S(k_1,k_3,k_2+1|l)-[3]_{v_i} R(k_1,k_3,k_2|l+1)\bigg)\\\notag
&+\bigg\{(1+v_i^2)B_{i,k_3}[B_{i,k_2+1},B_{i,k_1}]_{v_i^2}B_{j,l}+[B_{i,k_2+1},B_{i,k_1}]_{v_i^2}B_{i,k_3}B_{j,l}\\\label{ind4}
&-[3]_{v_i}[B_{i,k_2+1},B_{i,k_1}]_{v_i^2}B_{j,l}B_{i,k_3}-v_i^{-2}[3]_{v_i}B_{i,k_3} B_{j,l} [B_{i,k_2+1},B_{i,k_1}]_{v_i^2}\\\notag
&+v_i^{-2}B_{j,l} B_{i,k_3} [B_{i,k_2+1},B_{i,k_1}]_{v_i^2}+(v_i^{-2}+v_i^{-4})B_{j,l}[B_{i,k_2+1},B_{i,k_1}]_{v_i^2}B_{i,k_3}\bigg\}\\\notag
&-\bigg(S(k_2,k_3,k_1+1|l)-[3]_{v_i} R(k_2,k_3,k_1|l+1)\bigg)+\bigg\{k_1\leftrightarrow k_2\bigg\}\\\notag
&+Q_{1,2}.
\end{align}
By \eqref{iDRG3'}, we have the following relation
\begin{equation}
[B_{i,k_2+1},B_{i,k_1}]_{v_i^2}+[B_{i,k_1+1},B_{i,k_2}]_{v_i^2}=\Theta_i(k_1,k_2).
\end{equation}
Now we can apply the above relation to those two $\bigg\{\ \bigg\}$-terms in \eqref{ind4} and we obtain
\begin{align}\notag
&S(k_1,k_2,k_3+1|l)+S(k_1+1,k_2,k_3|l)+S(k_1,k_2+1,k_3|l)
\\\notag
&-[3]_{v_i}\big( R(k_1,k_2,k_3|l+1)+  R(k_1,k_3,k_2|l+1)+  R(k_2,k_3,k_1|l+1)\big)
\\
\label{ind5}=&-v_i^{-1}[2]_{v_i}\big[[\Theta_i(k_2,k_3),B_{j,l}]_{v_i^{-2}},B_{i,k_1}\big]_{v_i^{2}}+\big[\Theta_i(k_2,k_3),[B_{i,k_1},B_{j,l}]_{v_i^2}\big]_{v_i^{-4}}
\\\notag
&-v_i^{-1}[2]_{v_i}\big[[\Theta_i(k_1,k_3),B_{j,l}]_{v_i^{-2}},B_{i,k_2}\big]_{v_i^{2}}+\big[\Theta_i(k_1,k_3),[B_{i,k_2},B_{j,l}]_{v_i^2}\big]_{v_i^{-4}}
\\\notag
&-v_i^{-1}[2]_{v_i}\big[[\Theta_i(k_1,k_2),B_{j,l}]_{v_i^{-2}},B_{i,k_3}\big]_{v_i^{2}}+\big[\Theta_i(k_1,k_2),[B_{i,k_3},B_{j,l}]_{v_i^2}\big]_{v_i^{-4}}.
\end{align}
Finally, by \eqref{ind0}, we obtain \eqref{ind5-1} from \eqref{ind5}, as desired. 

\end{proof}

\subsection{Generating function formulation}\label{genfor}
By taking suitable linear combination of two symmetric formulations, we derive the Serre relation \eqref{iDRG6} and thus finish the proof of Theorem \ref{DprBCF}.\par
Fix $i,j\in \I_0$ such that $c_{ij}=-2$. Recall the notation $\mathbb{S}(w_1,w_2,w_3|z;i,j)$ from \eqref{eq:BCF} and denote it by $\mathbb{S}(w_1,w_2,w_3|z)$ for short.\par
We can rewrite \eqref{ind6} in terms of generating functions as
\begin{align}\notag
&(w_1^{-1}+w_2^{-1}+w_3^{-1}-[3]_{v_i}z^{-1})\mathbb{S}(w_1,w_2,w_3|z)\\
\label{gen1}=&-v_i[2]_{v_i}\Sym_{w_1,w_2,w_3} \frac{\bDel(w_2w_3)}{v_i-v_i^{-1}}(v_i^{-2}w_3^{-1}-w_2^{-1})\big[[\bTH_i(w_2),\B_j(z)]_{v_i^{-4}}, \B_i(w_1)\big]\K_i\\\notag
&+\Sym_{w_1,w_2,w_3} \frac{\bDel(w_2w_3)}{v_i-v_i^{-1}}(v_i^{-2}w_3^{-1}-w_2^{-1})\big[\bTH_i(w_2),[\B_i(w_1), \B_j(z)]_{v_i^{-2}}\big]\K_i,
\end{align}
 and rewrite \eqref{ind7} in terms of generating functions as
 \begin{align}\notag
&(w_1+w_2+w_3-[3]_{v_i}z)\bS(w_1,w_2,w_3|z)\\
\label{gen2}=&v_i[2]_{v_i}\Sym_{w_1,w_2,w_3} \frac{\bDel(w_2w_3)}{v_i-v_i^{-1}}(v_i^{-2}w_2-w_3)\big[\B_i(w_1),[\B_j(z),\bTH_i(w_2)]_{v_i^{-4}}\big]\K_i\\\notag
&-\Sym_{w_1,w_2,w_3} \frac{\bDel(w_2w_3)}{v_i-v_i^{-1}}(v_i^{-2}w_2-w_3)\big[[ \B_j(z),\B_i(w_1)]_{v_i^{-2}},\bTH_i(w_2)\big]\K_i.
\end{align}
We calculate $\eqref{gen2}\times [3]_{v_i}z^{-1}+\eqref{gen1}\times (w_1+w_2+w_3)$ and obtain
\begin{align}
\label{gen4'}&\big((w_1+w_2+w_3)(w_1^{-1}+w_2^{-1}+w_3^{-1})-[3]^2_{v_i}\big)\mathbb{S}(w_1,w_2,w_3|z)\K_i^{-1}\\\notag
=&[3]_{v_i}z^{-1}\Sym_{w_1,w_2,w_3} \frac{\bDel(w_2w_3)}{v_i-v_i^{-1}}(v_i^{-2}w_2-w_3)\\\notag
&\quad\times\bigg(v_i[2]_{v_i}\big[\B_i(w_1),[\B_j(z),\bTH_i(w_2)]_{v_i^{-4}}\big]-\big[[ \B_j(z),\B_i(w_1)]_{v_i^{-2}},\bTH_i(w_2)\big]\bigg)\\\notag
&+\Sym_{w_1,w_2,w_3} \frac{\bDel(w_2w_3)}{v_i-v_i^{-1}}(v_i^{-2}w_3^{-1}-w_2^{-1})(w_1+w_2+w_3)\\\notag
&\quad\times\bigg(-v_i[2]_{v_i}\big[[\bTH_i(w_2),\B_j(z)]_{v_i^{-4}}, \B_i(w_1)\big]+\big[\bTH_i(w_2),[\B_i(w_1), \B_j(z)]_{v_i^{-2}}\big]\bigg).
\end{align}
Dividing both sides of \eqref{gen4'} by the coefficient of $\mathbb{S}(w_1,w_2,w_3|z)$, we obtain the defining relation \eqref{iDRG6} of $\tUiD$.

\begin{remark}\label{double} Our formulation \eqref{iDRG6} of the Serre relation for $c_{ij}=-2$ is not unique since an alternative formulation can be derived as follows. We calculate $\eqref{gen2}\times(w_1^{-1}+w_2^{-1}+w_3^{-1}) + \eqref{gen1}\times [3]_{v_i}z$ and obtain a variant of \eqref{gen4'} as
\begin{align}
\label{gen4}&\big((w_1+w_2+w_3)(w_1^{-1}+w_2^{-1}+w_3^{-1})-[3]^2_{v_i}\big)\bS(w_1,w_2,w_3|z)\K_i^{-1}\\\notag
=&\Sym_{w_1,w_2,w_3} \frac{\bDel(w_2w_3)}{v_i-v_i^{-1}}(v_i^{-2}w_2-w_3)(w_1^{-1}+w_2^{-1}+w_3^{-1})\\\notag
&\quad\times \bigg(v_i[2]_{v_i}\big[\B_i(w_1),[\B_j(z),\bTH_i(w_2)]_{v_i^{-4}}\big]- \big[[ \B_j(z),\B_i(w_1)]_{v_i^{-2}},\bTH_i(w_2)\big]\bigg)\\\notag
&+[3]_{v_i}z\Sym_{w_1,w_2,w_3} \frac{\bDel(w_2w_3)}{v_i-v_i^{-1}}(v_i^{-2}w_3^{-1}-w_2^{-1})\\\notag
&\quad\times\bigg(-v_i[2]_{v_i}\big[[\bTH_i(w_2),\B_j(z)]_{v_i^{-4}},\B_i(w_1)\big]+\big[\bTH_i(w_2),[\B_i(w_1), \B_j(z)]_{v_i^{-2}}\big]\bigg).
\end{align}
Dividing both sides of \eqref{gen4} by the coefficient of $\bS(w_1,w_2,w_3|z)$, we get an alternative formulation for the Serre relation, which looks different from \eqref{iDRG6}. In fact, \eqref{gen4} can be obtained from \eqref{gen4'} by applying $\Psi$, and thus the alternative formulation of the Serre relation can also be obtained from \eqref{iDRG6} by applying $\Psi$.
\end{remark}

\end{document}